\theoremstyle{plain}
\newtheorem{theorem}{Theorem}
\newtheorem{lemma}{Lemma}[section]
\theoremstyle{definition}
\newtheorem{definition}{Definition}
\newtheorem{example}{Example}
\newtheorem{problem}{Problem}
\newcommand{\abs}[1]{\left\lvert#1\right\rvert}
\let\leq\leqslant
\let\geq\geqslant
\let\phi\varphi
\title{Independent domination in the graph defined by\\ two consecutive levels of the $n$-cube}
\author[T. Kalinowski]{Thomas Kalinowski$^1$}
\author[U. Leck]{Uwe Leck$^2$}
\address{$^1$Institut f\"ur Mathematik, Universit\"at Rostock, Germany}
\address{$^2$Institut f\"ur Mathematik, Europa-Universit\"at Flensburg, Germany}
\email[T.~Kalinowski]{thomas.kalinowski@uni-rostock.de}
\email[U.~Leck]{Uwe.Leck@uni-flensburg.de}
\date{\today}
\begin{document}

\begin{abstract}
Fix a positive integer $n$ and consider the bipartite graph whose vertices are the $3$-element subsets and the $2$-element subsets of $[n]=\{1,2,\dots,n\}$, and there is an edge between $A$ and $B$ if $A\subset B$. We prove that the domination number of this graph is $\binom{n}{2}-\lfloor\frac{(n+1)^2}{8}\rfloor$, we characterize the dominating sets of minimum size, and we observe that the minimum size dominating set can be chosen as an independent set. This is an exact version of an asymptotic result by Balogh, Katona, Linz and Tuza (2021). For the corresponding bipartite graph between the $(k+1)$-element subsets and the $k$-elements subsets of $[n]$ ($k\geq 3$), we provide a new construction for small independent dominating sets. This improves on a construction by Gerbner, Kezegh, Lemons, Palmer, P\'alv\"olgyi and Patk\'os (2012), who studied these independent dominating sets under the name saturating flat antichains.

\smallskip

\noindent\textbf{Keywords.} Independent domination number; saturating flat antichain 

\end{abstract}

\maketitle

\section{Introduction}\label{sec:intro}
For a graph $G=(V,E)$, a vertex set $D\subseteq V$ is called \emph{dominating} if every vertex $v\in V\setminus D$ has a neighbor in $D$. The domination number $\gamma(G)$ is the minimum size of a dominating set in $G$. Adding the condition that the set $D$ is an \emph{independent set} in $G$, that is, no two vertices in $D$ are adjacent, we obtain the \emph{independent domination number}, denoted by $i(G)$.
These graph parameters have been studied intensively~\cite{Haynes2013,Haynes2017,Goddard2013}. Clearly, $\gamma(G)\leq i(G)$, and it is known that $\gamma(G)=i(G)$ for claw-free graphs $G$~\cite{Allan1978}.

Let $n$ be a positive integer. We use $[n]$ to denote the set $\{1,2,\dots,n\}$, and for a positive integer $k<n$, $\binom{[n]}{k}$ denotes the set of $k$-element subsets of $[n]$. For $1\leq k<l\leq n$, let $G_{l,k}$ be the bipartite graph with vertex set $V=\binom{[n]}{k}\cup\binom{[n]}{l}$ where $A\in\binom{[n]}{k}$ and $B\in\binom{[n]}{l}$ are adjacent if $A\subseteq B$. The investigation of the domination number of $G_{l,k}$ has been initiated in~\cite{Badakhshian_2019,Balogh2021}. In this paper we focus on the case $l=k+1$. For $(l,k)=(3,2)$ we extend the arguments from~\cite{Gruettmueller2009} to confirm a conjecture from \cite{Badakhshian_2019}, an asymptotic version of which has been established in~\cite{Balogh2021} (for general $(l,2)$).
\nocite{Gerbner2012}
\begin{theorem}\label{thm:23}
  $\displaystyle i(G_{3,2})=\gamma(G_{3,2})=\binom{n}{2}-\left\lfloor\frac{(n+1)^2}{8}\right\rfloor$.
\end{theorem}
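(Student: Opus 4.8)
The plan is to recast the problem as an extremal question about graphs on $[n]$. Write a candidate dominating set as $D=\mathcal P\cup\mathcal T$ with $\mathcal P\subseteq\binom{[n]}2$ and $\mathcal T\subseteq\binom{[n]}3$. Domination is equivalent to two covering conditions: every pair not in $\mathcal P$ lies in a triple of $\mathcal T$, and every triple not in $\mathcal T$ contains a pair of $\mathcal P$. Encode the pairs \emph{not} chosen into $\mathcal P$ as the edge set of a graph $H$ on $[n]$, so that $\lvert E(H)\rvert=\binom n2-\lvert\mathcal P\rvert$. The two conditions then say precisely that $\mathcal T$ must (i) cover every edge of $H$ by some triple containing it and (ii) contain every triangle of $H$: indeed a triple meeting $\mathcal P$ in no pair is exactly an independent triple of $\overline H$, that is, a triangle of $H$. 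Since $\lvert D\rvert=\binom n2-\bigl(\lvert E(H)\rvert-\lvert\mathcal T\rvert\bigr)$, computing $\gamma(G_{3,2})$ amounts to maximizing $\lvert E(H)\rvert-\lvert\mathcal T\rvert$ over all graphs $H$ and all admissible $\mathcal T$. I also record that $D$ is independent in $G_{3,2}$ exactly when it is an antichain, i.e.\ no chosen pair lies inside a chosen triple; this is what will let me deduce $i=\gamma$.

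For a fixed $H$ the cheapest admissible $\mathcal T$ is easy to bound from below. Condition (ii) forces all $\tau$ triangles of $H$ into $\mathcal T$, and these already cover every edge lying in a triangle; an edge in no triangle must be covered by a non-triangle triple, and each such triple covers at most two edges. Writing $m=\lvert E(H)\rvert$ and $m'$ for the number of edges of $H$ in no triangle, this gives $\lvert\mathcal T\rvert\ge\tau+\lceil m'/2\rceil$ and hence
\[
  \lvert E(H)\rvert-\lvert\mathcal T\rvert\ \le\ m-\tau-\tfrac{m'}{2}.
\]
The lower bound in the theorem therefore follows once one proves the purely graph-theoretic inequality that the right-hand side is at most $\lfloor(n+1)^2/8\rfloor$ for every graph $H$ on $n$ vertices.

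For the matching upper bound I would exhibit an explicit extremal $H$. Fix an even integer $a$ maximizing $a(n+1-a)$, split $[n]=A\cup B$ with $\lvert A\rvert=a$, and let $H$ be the complete bipartite graph between $A$ and $B$ together with a perfect matching inside $A$. Every triangle of $H$ is then a \emph{book page} $\{x,y,b\}$ with $xy$ a matching edge in $A$ and $b\in B$, and since every vertex of $A$ is matched these triangles cover all $\lvert A\rvert\lvert B\rvert$ bipartite edges; so one may take $\mathcal T$ to be exactly the set of triangles, with no triangle-free edges left over. A direct count gives $\lvert E(H)\rvert-\lvert\mathcal T\rvert=a(n+1-a)/2$, and maximizing this over even $a$ yields $\lfloor(n+1)^2/8\rfloor$ (the evenness constraint producing the floor). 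The associated set $D$ is an antichain by construction, hence an \emph{independent} dominating set of the claimed size; as it attains the lower bound it is a minimum dominating set, and being independent it gives $i(G_{3,2})\le\gamma(G_{3,2})$, whence $i=\gamma$.

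The hard part is the extremal inequality $m-\tau-\tfrac{m'}2\le\lfloor(n+1)^2/8\rfloor$. It is tight and parity-sensitive, and the difficulty is the tension between the two triangle terms: forcing many edges to lie in triangles tends, by supersaturation, to create many triangles and so inflate $\tau$, which is why the optimum is a balanced complete bipartite graph improved only by a one-sided matching rather than by anything denser. I would attempt it either by a stability/exchange argument showing that an arbitrary $H$ can be transformed toward this shape without decreasing $m-\tau-\tfrac{m'}2$, or by induction on $n$, deleting a carefully chosen vertex and tracking the simultaneous changes in $m$, $\tau$ and $m'$; in either route the weighting technique of Gr\"uttm\"uller et al.\ is the natural tool to adapt, and pushing the floor correctly through the parity cases is where the bookkeeping will be most delicate.
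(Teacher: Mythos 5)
Your framework is, in fact, the paper's own: the translation of domination into your two covering conditions, the bound $\lvert\mathcal T\rvert\geq\tau+\lceil m'/2\rceil$ (a non-triangle triple covers at most two triangle-free edges), and the resulting reduction of $\gamma(G_{3,2})$ to maximizing $m-\tau-\tfrac12 m'$ over all graphs on $[n]$ reproduce \Cref{lem:translation}; your bipartite-plus-matching graph is exactly $K^+_{a,n-a}$ from \Cref{thm:extremal_constructions}, and your computation that it attains $\lfloor(n+1)^2/8\rfloor$ with every edge in a triangle, giving an \emph{independent} dominating set and hence $i=\gamma$ once the lower bound is known, is correct.

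The gap is that the lower bound rests entirely on the extremal inequality $m-\tau-\tfrac12 m'\leq\lfloor(n+1)^2/8\rfloor$ for every graph on $n$ vertices. This is \Cref{lem:main_inequality} of the paper, it is the whole mathematical content of the theorem, and you do not prove it: you name it, correctly flag it as the hard part, and offer two unexecuted strategies. Neither is what works in the paper. The paper's argument is a single global double count, not a stability or induction argument: for a fixed triangle $xyz$, classifying the remaining vertices by their number of neighbors in $\{x,y,z\}$ gives the local identity $d(x)+d(y)+d(z)-(t(xy)-1)-(t(yz)-1)-(t(xz)-1)=n+3-\alpha_{xyz}$; summing over all triangles, substituting $t(x)=\tfrac12 d(x)+\tfrac12\sum_{y\in N(x)}(t(xy)-1)$, and completing the square via $\sum_x d(x)^2=2(n+1)\lvert E\rvert-\tfrac{n(n+1)^2}{4}+\sum_x\bigl(\tfrac{n+1}{2}-d(x)\bigr)^2$ yields
\[
\lvert E\rvert-\lvert T\rvert-\tfrac12\lvert E_0\rvert\;\leq\;\frac{(n+1)^2}{8}-\frac{1}{n}\Bigl(\lvert E_0\rvert+\text{nonnegative correction terms}\Bigr),
\]
after which the floor follows from a short parity analysis in each congruence class of $n$. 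Be aware also that your fallback route, induction on $n$ by vertex deletion, has no slack: in the extremal graphs \emph{every} vertex $v$ satisfies $d(v)-t(v)-\tfrac12(\text{increase in } m')\approx(n+1)/4$, which is exactly the increment $\lfloor(n+1)^2/8\rfloor-\lfloor n^2/8\rfloor$ of the bound, so finding a deletable vertex in an arbitrary graph is essentially as hard as the inequality itself. As written, your proposal establishes only $\gamma(G_{3,2})\leq i(G_{3,2})\leq\binom{n}{2}-\lfloor(n+1)^2/8\rfloor$; the matching lower bound, and with it the theorem, remains unproven.
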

For a dominating set in $D$ in $G_{3,2}$ we write $D_2$ and $D_3$ for the collections of $2$-sets and $3$-sets in $D$, respectively, that is $D_i=D\cap\binom{[n]}{i}$ for $i\in\{2,3\}$. In the proof of Theorem \ref{thm:23} it turns out to be useful to associate with a dominating set $D=D_2\cup D_3$ a graph $H=H(D)$ with vertex set $V(H)=[n]$. Two vertices $x$ and $y$ are adjacent in $H$ if $\{x,y\}\notin D$, but $\{x,y,z\}\in D$ for some $z\in[n]$. In other words, the edge set of $H(D)$ is $E(H)=\Delta D_3\setminus D_2$ where $\Delta$ denotes the \emph{shadow} defined by $\Delta D_3=\{A\,:\,\abs{A}=2\text{ and }A\subseteq X\text{ for  some }X\in D_3\}$. From $D$ being a dominating set it follows that the vertex set of every triangle of the graph $H$ is an element of $D_3$. Moreover, if every edge of $H$ is contained in a triangle in $H$ (in other words, for every edge $xy$, the vertices $x$ and $y$ have a common neighbor $z$), and $D$ is a minimal dominating set, then $D_3$ is the set of triangles in $H$ and $D_2$ is the set of non-edges: $D_2=\binom{[n]}{2}\setminus E(H)$. In this situation, $D$ is an independent set in $G_{3,2}$. 

We will show that the graphs coming from minimum dominating sets in $G_{3,2}$ can be described
as follows. For $1<s<n$, let $K^+_{s,n-s}$ be the graph such that
 $[s]\subseteq[n]$ induces a matching of size $\lfloor s/2\rfloor$ (and an additional isolated vertex if $s$ is odd),
$[s+1,n]$ is an independent set, and
$uv\in E\left(K^+_{s,n-s}\right)$ for all $u\in[s]$, $v\in[s+1,n]$ (see Figure~\ref{fig:standard_construction}). Moreover, let $H_{5a}$, $H_{5b}$ and $H_9$ be the three graphs shown in Figure~\ref{fig:extra_graphs}.
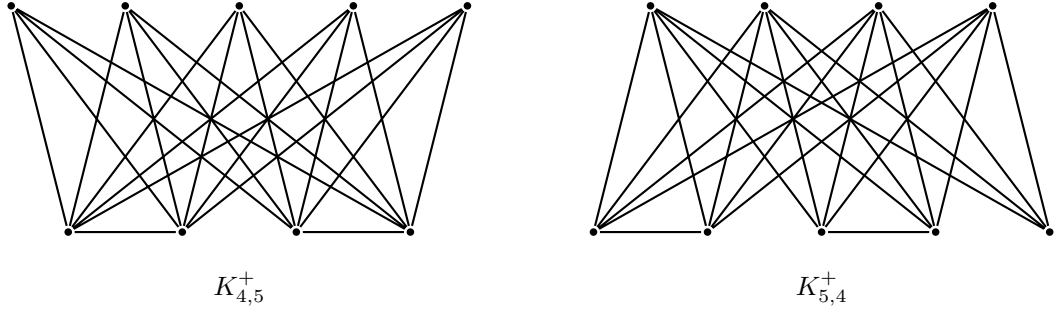
\begin{figure}[htb]
  \begin{minipage}[b]{.45\linewidth}
    \centering
    \begin{tikzpicture}[scale=1.5]
      \node[circle,fill=black,outer sep=1pt,inner sep=1pt] (v1) at (0,0) {};
      \node[circle,fill=black,outer sep=1pt,inner sep=1pt] (v2) at (1,0) {};
      \node[circle,fill=black,outer sep=1pt,inner sep=1pt] (v3) at (2,0) {};
      \node[circle,fill=black,outer sep=1pt,inner sep=1pt] (v4) at (3,0) {};
      \node[circle,fill=black,outer sep=1pt,inner sep=1pt] (w1) at (-.5,2) {};
      \node[circle,fill=black,outer sep=1pt,inner sep=1pt] (w2) at (.5,2) {};
      \node[circle,fill=black,outer sep=1pt,inner sep=1pt] (w3) at (1.5,2) {};
      \node[circle,fill=black,outer sep=1pt,inner sep=1pt] (w4) at (2.5,2) {};
      \node[circle,fill=black,outer sep=1pt,inner sep=1pt] (w5) at (3.5,2) {};
      \draw[thick] (v1) -- (v2) -- (w1) -- (v1) -- (w2) -- (v2) -- (w3) -- (v1) -- (w4) -- (v2) --(w5) -- (v1);
      \draw[thick] (v3) -- (v4) -- (w1) -- (v3) -- (w2) -- (v4) -- (w3) -- (v3) -- (w4) -- (v4) --(w5) -- (v3); 
      \node[draw=none,fill=none] at (1.5,-0.5) {$K^+_{4,5}$};
    \end{tikzpicture}
  \end{minipage}
  \begin{minipage}[b]{.45\linewidth}
    \centering
    \begin{tikzpicture}[scale=1.5]
      \node[circle,fill=black,outer sep=1pt,inner sep=1pt] (v1) at (-.5,0) {};
      \node[circle,fill=black,outer sep=1pt,inner sep=1pt] (v2) at (.5,0) {};
      \node[circle,fill=black,outer sep=1pt,inner sep=1pt] (v3) at (1.5,0) {};
      \node[circle,fill=black,outer sep=1pt,inner sep=1pt] (v4) at (2.5,0) {};
      \node[circle,fill=black,outer sep=1pt,inner sep=1pt] (v5) at (3.5,0) {};
      \node[circle,fill=black,outer sep=1pt,inner sep=1pt] (w1) at (0,2) {};
      \node[circle,fill=black,outer sep=1pt,inner sep=1pt] (w2) at (1,2) {};
      \node[circle,fill=black,outer sep=1pt,inner sep=1pt] (w3) at (2,2) {};
      \node[circle,fill=black,outer sep=1pt,inner sep=1pt] (w4) at (3,2) {};
      \draw[thick] (v1) -- (v2) -- (w1) -- (v1) -- (w2) -- (v2) -- (w3) -- (v1) -- (w4) -- (v2);
      \draw[thick] (v3) -- (v4) -- (w1) -- (v3) -- (w2) -- (v4) -- (w3) -- (v3) -- (w4) -- (v4);
      \draw[thick] (w1) -- (v5) -- (w2);
      \draw[thick] (w3) -- (v5) -- (w4);
      \node[draw=none,fill=none] at (1.5,-0.5) {$K^+_{5,4}$};
    \end{tikzpicture}
  \end{minipage}
  \caption{The graphs $K^+_{4,5}$ and $K_{5,4}^+$}
  \label{fig:standard_construction}
\end{figure}

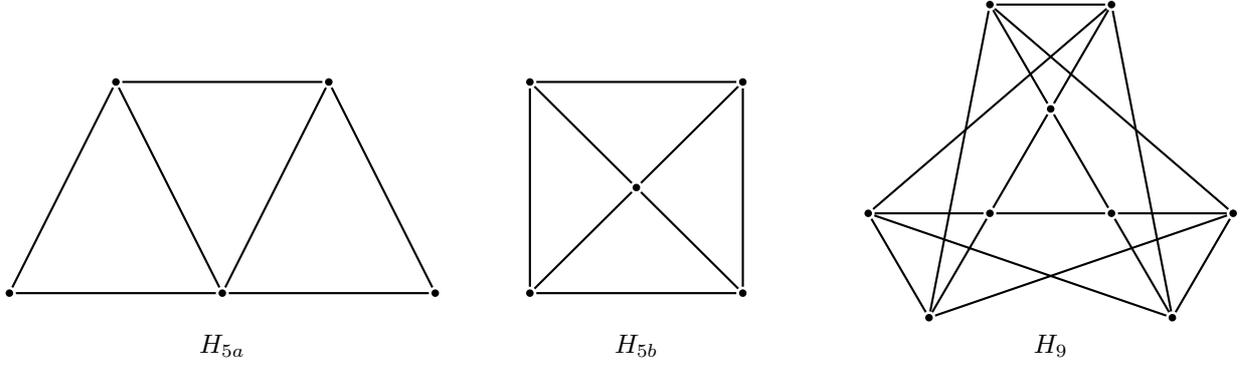
\begin{figure}[htb]
   \begin{minipage}[b]{.35\linewidth}
    \centering
    \begin{tikzpicture}[scale=1.4]
      \node[circle,fill=black,outer sep=1pt,inner sep=1pt] (v1) at (0,0) {};
      \node[circle,fill=black,outer sep=1pt,inner sep=1pt] (v2) at (2,0) {};
      \node[circle,fill=black,outer sep=1pt,inner sep=1pt] (v3) at (4,0) {};
      \node[circle,fill=black,outer sep=1pt,inner sep=1pt] (v4) at (1,2) {};
      \node[circle,fill=black,outer sep=1pt,inner sep=1pt] (v5) at (3,2) {};
      \draw[thick] (v1) -- (v2) -- (v3) -- (v5) -- (v4) -- (v1);
      \draw[thick] (v4) -- (v2) -- (v5);
      \node at (2,-0.5) {$H_{5a}$};
    \end{tikzpicture}
  \end{minipage}\hfill
   \begin{minipage}[b]{.29\linewidth}
    \centering
    \begin{tikzpicture}[scale=1.4]
      \node[circle,fill=black,outer sep=1pt,inner sep=1pt] (v1) at (0,0) {};
      \node[circle,fill=black,outer sep=1pt,inner sep=1pt] (v2) at (2,0) {};
      \node[circle,fill=black,outer sep=1pt,inner sep=1pt] (v3) at (2,2) {};
      \node[circle,fill=black,outer sep=1pt,inner sep=1pt] (v4) at (0,2) {};
      \node[circle,fill=black,outer sep=1pt,inner sep=1pt] (v5) at (1,1) {};
      \draw[thick] (v1) -- (v2) -- (v3) -- (v4) -- (v1) -- (v5) -- (v2);
      \draw[thick] (v3) -- (v5) -- (v4);
      \node at (1,-0.5) {$H_{5b}$};
    \end{tikzpicture}
  \end{minipage}\hfill
  \begin{minipage}[b]{.35\linewidth}
    \centering
    \begin{tikzpicture}[scale=.8]
      \node[circle,fill=black,outer sep=1pt,inner sep=1pt] (v1) at (0,0) {};
      \node[circle,fill=black,outer sep=1pt,inner sep=1pt] (v2) at (2,0) {};
      \node[circle,fill=black,outer sep=1pt,inner sep=1pt] (v3) at (4,0) {};
      \node[circle,fill=black,outer sep=1pt,inner sep=1pt] (v4) at (6,0) {};
      \node[circle,fill=black,outer sep=1pt,inner sep=1pt] (v5) at (1,-1.73) {};
      \node[circle,fill=black,outer sep=1pt,inner sep=1pt] (v6) at (5,-1.73) {};
      \node[circle,fill=black,outer sep=1pt,inner sep=1pt] (v7) at (3,1.73) {};
      \node[circle,fill=black,outer sep=1pt,inner sep=1pt] (v8) at (2,3.46) {};
      \node[circle,fill=black,outer sep=1pt,inner sep=1pt] (v9) at (4,3.46) {};
      \draw[thick] (v1) -- (v2) -- (v3) -- (v4) -- (v6) -- (v3) -- (v7) -- (v2) -- (v5) --(v1) --
      (v9) -- (v8) -- (v7) -- (v9) -- (v6) --(v1);
      \draw[thick] (v4) -- (v5) -- (v8) -- (v4);
      \node at (3,-2.2) {$H_{9}$};
    \end{tikzpicture}
    \end{minipage}
    \caption{Three small graphs}
    \label{fig:extra_graphs}
\end{figure}

\begin{theorem}\label{thm:extremal_constructions}
 Suppose $D=D_2\cup D_3$ is a dominating set in $G_{3,2}$ with
 $D=\binom{n}{2}-\left\lfloor\frac{(n+1)^2}{8}\right\rfloor$ and let $H=H(D)$ be the corresponding graph. 
 \begin{enumerate}
 \item If $n\equiv 0\pmod 4$ then $H$ is isomorphic to $ K^+_{n/2,n/2}$.
 \item If $n\equiv 1\pmod 4$ then $H$ is isomorphic to one of the following graphs: $H_{5a}$, $H_{5b}$, $H_9$,
   $K^+_{s,n-s}$ for $2s\in\{n-1,n+1,n+3\}$.
 \item If $n\equiv 2\pmod 4$ then $H$ is isomorphic to $K^+_{(n+2)/2,(n-2)/2}$.
 \item If $n\equiv 3\pmod 4$ then $H$ is isomorphic to $ K^+_{(n+1)/2,(n-1)/2}$.  
 \end{enumerate}
\end{theorem}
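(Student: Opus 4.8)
The plan is to reformulate the statement as an extremal problem about the graph $H=H(D)$ and then carry out an equality analysis. First I observe that for \emph{any} dominating set $D$ the definition $E(H)=\Delta D_3\setminus D_2$ together with the domination condition for $2$-sets forces $D_2=\binom{[n]}{2}\setminus E(H)$, so that $\abs{D_2}=\binom{n}{2}-e(H)$ with $e(H)=\abs{E(H)}$. The domination condition for $3$-sets says exactly that $D_3$ contains every triangle of $H$, while the condition for $2$-sets says that every edge of $H$ lies in some member of $D_3$. Writing $t(H)$ for the number of triangles, $E_0(H)$ for the edges lying in no triangle, and $c(E_0)$ for the minimum number of triples covering $E_0$, a minimum $D_3$ therefore consists of all triangles together with a minimum triple cover of $E_0$, giving $\abs{D}=\binom{n}{2}-\Phi(H)$ with
\[
  \Phi(H)=e(H)-t(H)-c\bigl(E_0(H)\bigr).
\]
Since a triple contains at most two edges of $E_0$ and any two such edges share a vertex (hence lie in one component of $(V,E_0)$), we get $c(E_0)=\sum_{C}\lceil e(C)/2\rceil$ summed over the components $C$ of $(V,E_0)$, so $\Phi(H)=\bigl(e_T(H)-t(H)\bigr)+\sum_{C}\lfloor e(C)/2\rfloor$, where $e_T(H)=e(H)-\abs{E_0(H)}$ counts the edges in triangles.

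Every graph $H$ on $[n]$ arises as $H(D)$ for the dominating set described above, so $\min\abs{D}=\binom{n}{2}-\max_{H}\Phi(H)$; combined with \Cref{thm:23} this yields $\max_{H}\Phi(H)=\lfloor(n+1)^2/8\rfloor$. In particular the inequality $\Phi(H)\le\lfloor(n+1)^2/8\rfloor$ is \emph{free} from \Cref{thm:23}, and a dominating set $D$ has minimum size if and only if $\Phi(H(D))=\lfloor(n+1)^2/8\rfloor$. Thus \Cref{thm:extremal_constructions} is equivalent to characterizing the graphs on $[n]$ that attain this maximum, and the only new content is the equality analysis.

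That the listed graphs attain the maximum is a direct computation. For $K^+_{s,n-s}$ one finds $e_T-t=\lfloor s/2\rfloor(n-s+1)$, with an additional contribution $\lfloor(n-s)/2\rfloor$ from the star of triangle-free edges at the unmatched vertex when $s$ is odd; hence $\Phi(K^+_{s,n-s})=\lfloor s/2\rfloor(n-s+1)$ for even $s$ and $\lfloor s/2\rfloor(n-s+1)+\lfloor(n-s)/2\rfloor$ for odd $s$. Optimizing this integer-valued, essentially quadratic expression in $s$ produces the value $\lfloor(n+1)^2/8\rfloor$ exactly for the values of $s$ named in each residue class of $n$ modulo $4$, and a separate check gives $\Phi(H_{5a})=\Phi(H_{5b})=4$ (for $n=5$) and $\Phi(H_9)=\lfloor 100/8\rfloor=12$ (for $n=9$). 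For the reverse direction I would organize the analysis around the two summands of $\Phi$: triangle-free edges contribute at most one half per edge and are penalized on short odd components, so they favour long paths, while $e_T-t$ is maximized by \emph{book} configurations in which many triangles share a spine edge, and is most efficient per vertex when several books additionally share their apex vertices — precisely the shared-page structure of $K^+_{s,n-s}$. The strategy, extending the weight/compression arguments of \cite{Gruettmueller2009}, is to show by a sequence of local modifications that any $\Phi$-maximal $H$ can be transformed, without decreasing $\Phi$, into one whose triangle edges form vertex-disjoint books over a common apex set with the remaining edges attached as paths, and then to track which modifications preserve rather than strictly increase $\Phi$.

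The hard part will be this equality analysis, and in particular the sporadic extremal graphs $H_{5a}$, $H_{5b}$, $H_9$ occurring for $n\equiv 1\pmod 4$. In this residue class the optimizing expression has two integer maximizers (which explains the three $K^+$ graphs with $2s\in\{n-1,n+1,n+3\}$), and the slack this creates permits a few non-$K^+$ configurations to also reach the maximum. These must be isolated by a finite case analysis for small $n$ together with an argument that no analogous exceptions survive once $n$ is large; making the local-modification scheme neither overlook nor spuriously create such configurations is the most delicate point and is where the extension of \cite{Gruettmueller2009} requires the greatest care.
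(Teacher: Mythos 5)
Your reduction is sound, and it is essentially the paper's own translation step: the identity $D_2=\binom{[n]}{2}\setminus E(H)$, the fact that $D_3$ must contain every triangle of $H$ and cover every edge of $H$, and the resulting formula $\gamma(G_{3,2})=\binom{n}{2}-\max_H\Phi(H)$ are a (slightly sharpened, per-component) version of \Cref{lem:translation}, and your computation that the listed graphs attain $\Phi(H)=\lfloor(n+1)^2/8\rfloor$ is the easy direction. But the theorem \emph{is} the converse --- that no graph outside the list attains the maximum --- and for that you offer only a plan: ``a sequence of local modifications \dots track which modifications preserve rather than strictly increase $\Phi$,'' which you yourself flag as the hard and most delicate part. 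Nothing in the proposal executes it: there is no argument that a $\Phi$-maximal graph not on the list admits a strictly improving modification, no mechanism that isolates the sporadic graphs $H_{5a}$, $H_{5b}$, $H_9$, and no reason why analogous exceptions cannot occur for large $n$. This converse is the entire content of the statement, so the proof has a genuine gap, not a stylistic difference.

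There is also a structural reason your setup, as it stands, cannot be completed cheaply: by invoking \Cref{thm:23} as a black box you discard exactly the information the characterization runs on. The paper does not merely prove the bound; it proves the refined inequality \eqref{eq:final_inequality}, whose slack terms $\alpha,\beta,\gamma$ give the equality conditions of \Cref{lem:main_inequality} (equality forces $E_0=\emptyset$ unless $n\equiv1\pmod 4$ and $\abs{E_0}$ is even). That single fact settles three of the four residue classes immediately, because $E_0=\emptyset$ means the minimum dominating set is independent and the extremal graphs are then known from \cite{Gruettmueller2009}; and for $n\equiv1\pmod4$ the same slack terms yield the budget $\abs{E_0}+\alpha+\beta+\frac12\gamma\leq\frac{n}{2}$ of \eqref{eq:n_1_base}, which is what powers the long sequence of structural claims in the appendix (including the vertex-deletion steps that re-invoke the independent characterization of \cite{Gruettmueller2009} on $H-\{x,y\}$ and $H-x$). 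To finish along your lines you would either have to reprove an equivalent refined inequality and then carry out a case analysis of comparable length, or make the local-modification scheme airtight through every equality case --- and equality-preserving modifications are precisely where sporadic extremal configurations like $H_{5a}$, $H_{5b}$, $H_9$ hide, which is why the paper does not take that route.
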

All but one of the graphs in Theorem~\ref{thm:extremal_constructions} have the property that every edge is contained in a triangle, so that the corresponding dominating set $D$ is determined by the graph $H$, and is an independent set in $G_{3,2}$. The exception is the graph $K^+_{(n+1)/2,(n-1)/2}$ for $n\equiv 1\pmod 4$. In this graph the edges that are not contained in a triangle form a star $S$, say with center $v$, with $\frac{n-1}{2}$ leaves, and a corresponding minimum dominating set in $G_{3,2}$ is obtained by taking $D_2$ to be the set of non-edges and $D_3$ the set of triangles together with $\frac{n-1}{4}$ sets of the form $\{v,x,y\}$ where $x$ and $y$ are leaves of $S$ (see the proof of Theorem \ref{thm:extremal_constructions} for a precise statement).

For $k\geq 3$, \cite[Theorem 14]{Gerbner2012} bounds the minimum size of an independent dominating set in $G_{k+1,k}$ as follows:
\begin{equation}\label{eq:gerbner_bound}
  \left(1-\frac{k-1}{k}t_k-o(1)\right)\binom{n}{k}\leq
  i(G_{k+1,k})\leq\left(1-\frac12\left(\frac{k-1}{k}\right)^{k-1}+o(1)\right)\binom{n}{k}.  
\end{equation}

Here, the $t_k$ is the Tur\'an-density for the complete $k$-uniform hypergraph on $k+1$ vertices, that is, 
\[t_k=\lim_{n\to\infty}\frac{\operatorname{ex}(n,K^{(k)}_{k+1})}{\binom{n}{k}},\]
where $\operatorname{ex}(n,K^{(k)}_{k+1})$ is the maximal number of edges in a $k$-uniform hypergraph on $n$ vertices without a complete $k$-uniform hypergraph on $k+1$ vertices as a subhypergraph. We improve the upper bound in~\eqref{eq:gerbner_bound} as follows.
\begin{theorem}\label{thm:independent_dominating_asymp}
  For every $k\geq 3$ and every $\alpha$ with $0<\alpha<1$,
  \[i\left(G_{k+1,k}\right)\leq\left(1-\frac{(k-1)^2\alpha(1-\alpha)^{k-2}}{k}+o(1)\right)\binom{n}{k}.\]
  This bound is minimized for $\alpha$ being the root of the polynomial $(k-1)x^k-kx+1=0$ in the interval $[0,\frac12]$.
\end{theorem}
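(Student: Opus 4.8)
The plan is to lift the hypergraph reformulation used for $k=2$ in the introduction to general $k$, turning the problem into a purely extremal question about $k$-uniform hypergraphs, and then to exhibit a one-parameter family of constructions generalizing $K^+_{s,n-s}$.

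First I would set up the reduction. Given a $k$-uniform hypergraph $\mathcal G$ on $[n]$, let $D_k=\binom{[n]}{k}\setminus E(\mathcal G)$ be the non-edges and let $D_{k+1}$ be the collection of all $(k+1)$-sets $W$ whose $k$-subsets are all edges of $\mathcal G$ (the ``$(k+1)$-cliques''). If every edge of $\mathcal G$ lies in some $(k+1)$-clique --- the exact analogue of ``every edge of $H$ lies in a triangle'' from the introduction --- then $D=D_k\cup D_{k+1}$ is an \emph{independent} dominating set of $G_{k+1,k}$: independence holds because a clique contains no non-edge; the $k$-sets are dominated because every edge sits in a clique; and the $(k+1)$-sets are dominated because a non-clique contains a non-edge (so one must take \emph{all} cliques into $D_{k+1}$). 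Writing $e(\mathcal G)$ for the number of edges and $c(\mathcal G)$ for the number of $(k+1)$-cliques, this yields
\[
  i(G_{k+1,k})\le|D|=\binom nk-\bigl(e(\mathcal G)-c(\mathcal G)\bigr).
\]
For $k=2$ this is exactly $\binom n2-(e(H)-t(H))$, and $K^+_{s,n-s}$ recovers Theorem~\ref{thm:23}. So it suffices to build a $\mathcal G$ in which every edge lies in a $(k+1)$-clique and $e(\mathcal G)-c(\mathcal G)\ge\bigl(\tfrac{(k-1)^2}{k}\alpha(1-\alpha)^{k-2}-o(1)\bigr)\binom nk$.

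For the construction I would generalize $K^+_{s,n-s}$ to a ``seed plus free part'' hypergraph. Fix a partition $[n]=P\cup Q$ whose proportions are governed by $\alpha$, install a sparse seed structure (the higher-uniform analogue of the matching on $[s]$), and arrange things so that each intended $(k+1)$-clique consists of a small seed together with $k-2$ otherwise unconstrained vertices of $Q$; then set $\mathcal G=\bigcup_W\binom Wk$, the union of the $k$-shadows of these intended cliques. By construction every edge lies in an intended clique, so the hypotheses of the reduction hold automatically, the $k-2$ free vertices of $Q$ produce the factor $(1-\alpha)^{k-2}$, and the seed produces the factor $\alpha$. I would then compute $e(\mathcal G)$ and $c(\mathcal G)$ asymptotically, obtaining $e(\mathcal G)-c(\mathcal G)=g(\alpha)\binom nk(1+o(1))$ with $g(\alpha)=\tfrac{(k-1)^2}{k}\alpha(1-\alpha)^{k-2}$, which gives the displayed bound for every fixed $\alpha$; optimising the resulting expression over $\alpha$ then leads to the polynomial $(k-1)x^{k}-kx+1$, whose relevant root lies in $[0,\tfrac12]$.

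The hard part will be controlling $c(\mathcal G)$: the inequality $i\le\binom nk-(e-c)$ only helps when $c(\mathcal G)$ is no larger than the number of intended cliques, so the core technical step is to prove that $\mathcal G$ has \emph{no} $(k+1)$-cliques beyond the seed-based ones. This is where the layered structure must be exploited, through a case analysis according to how many vertices a putative clique $W'$ has in $P$ versus $Q$ and how $W'$ meets the seed, ruling out every unintended configuration --- exactly as, for $k=2$, one checks that the only triangles of $K^+_{s,n-s}$ are a matching edge together with an outside vertex. A secondary, more routine, difficulty is the asymptotic bookkeeping for $e(\mathcal G)$ and $c(\mathcal G)$ and the verification that the $o\bigl(\binom nk\bigr)$ error terms --- coming from lower-order intersection patterns and from boundary effects of the seed --- are indeed negligible, after which the optimisation over $\alpha$ is a one-variable calculus problem.
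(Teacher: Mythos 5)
Your reduction is the same as the paper's (its ``well-covered $k$-graph'' correspondence), and it is correct: $i(G_{k+1,k})\leq\binom{n}{k}-\left(e(\mathcal G)-c(\mathcal G)\right)$ for any $k$-graph $\mathcal G$ in which every edge lies in a $(k+1)$-clique. The gap is in the construction, and it is quantitative, not just a matter of missing details. In your scheme a clique is a $3$-vertex seed plus $k-2$ vertices of $Q$, so the seeds must form a partial Steiner triple system on $P$ (pairs covered at most once, otherwise edges are double-counted and unintended cliques appear). With $\abs{P}=\alpha n$ this gives about $\frac{\alpha^2n^2}{6}$ seeds, hence $e(\mathcal G)-c(\mathcal G)\approx\frac{k(k-1)}{3}\alpha^2(1-\alpha)^{k-2}\binom{n}{k}$ --- note the factor $\alpha^2$, not $\alpha$; your accounting ``the seed produces the factor $\alpha$'' cannot be realized, since a seed family of size $\Theta(n)$ would yield only $O(n^{k-1})$ cliques and therefore $e-c=o\left(\binom{n}{k}\right)$. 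Optimizing over $\alpha$, your construction gives at most $\frac{4(k-1)}{3k}\left(\frac{k-2}{k}\right)^{k-2}$, which tends to $\frac43e^{-2}\approx 0.18$ as $k\to\infty$, while the theorem's coefficient tends to $e^{-1}\approx 0.37$. The same failure occurs for the shape that genuinely generalizes $K^+_{s,n-s}$ (a $k$-set seed from a partial Steiner system plus \emph{one} free vertex, which is exactly the paper's Lemma~\ref{lem:base_constuction}): it yields $(k-1)\alpha(1-\alpha)^{k-1}$, whose maximum $\left(1-\frac1k\right)^k$ is strictly below the theorem's bound for every $k\geq 3$ (for $k=3$: $8/27\approx 0.296$ versus $\approx 0.309$). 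No one-level ``seed plus free part'' construction reaches the stated bound.

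The idea you are missing is recursion, which is the actual content of the paper's proof. The base construction is arranged so that every edge meets $B$ in at most one vertex (Lemma~\ref{lem:base_constuction}(ii)); consequently $B$ is still ``empty'' as a host, and one can install a scaled copy of the same construction inside $B$, then inside the new leftover set, and so on. Summing the geometric series over the levels (Lemma~\ref{lem:asymptotics}) multiplies $(k-1)\alpha(1-\alpha)^{k-1}$ by $\frac{1}{1-\alpha^k}$, and only at the optimal $\alpha$ --- the root of $(k-1)x^k-kx+1$, where $1-\alpha^k=\frac{k(1-\alpha)}{k-1}$ --- does $\frac{(k-1)\alpha(1-\alpha)^{k-1}}{1-\alpha^k}$ simplify to the stated $\frac{(k-1)^2\alpha(1-\alpha)^{k-2}}{k}$. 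So the exponent pattern $\alpha(1-\alpha)^{k-2}$ that you tried to reverse-engineer into a count of clique shapes is an algebraic artifact of this optimization, not a direct count. Incidentally, the step you single out as the hard part --- excluding unintended cliques --- is immediate in the paper's construction: two $k$-subsets of a $(k+1)$-set share $k-1$ vertices, while distinct members of the packing $S$ share at most $k-2$, so the only cliques are the intended ones $X\cup\{b\}$ with $X\in S$, $b\in B$. The real difficulty, and the real novelty, is the iteration.
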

For small $k$, the values for the bounds are collected in Table~\ref{tab:numbers}, where we used the
upper bounds for $t_k$ from~\cite{Chung1999} for odd $k$,\textcolor{red}{~\cite{Markstroem2009} for $k=4$, and~\cite{Lu2009} for $k=6$.}
\begin{table}[htb]
  \centering
  \caption{Numerical values of the asymptotic bounds for
    $i(G_{k,k+1})/\binom{n}{k}$.}\label{tab:numbers}
  \begin{tabular}{cccc} \toprule
    $k$ & lower bound & upper bound from \cite{Gerbner2012} & new upper bound \\ \midrule
    3 & $0.604$ & $0.778$ & $0.691$ \\
    4 & $0.447$ & $0.790$ & $0.683$ \\
    5 & $0.384$ & $0.796$ & $0.673$ \\
    6 & $0.305$ & $0.800$ & $0.666$ \\
    7 & $0.279$ & $0.802$ & $0.661$ \\ \bottomrule
  \end{tabular}
\end{table}
\subsection*{Notation} Throughout we consider only simple graphs. We write $xy$ for the edge
$\{x,y\}$ and $xyz$ for a triangle with vertices $x$, $y$ and $z$. For a vertex $x$, $d(x)$ is the
degree of $x$ and $N(x)$ is the neighborhood of $x$. With $t(x)$ and $t(xy)$ we denote the number of
triangles containing the vertex $x$ and the edge $xy$, respectively. For a set $X$ of vertices in a
graph $G=(V,E)$, $G[X]$ denotes the subgraph induced by $X$ and $E(X)$ denotes the set of edges with
both endpoints in $X$. Similarly, for $X,Y\subseteq V$, $E(X,Y)$ is the set of edges with one
endpoint in $X$ and one endpoint in $Y$. For a set $A$, its \emph{shadow} $\Delta A$ is defined as $\Delta A=\{B\,:\,B\subseteq A\text{ and }\abs{B}=\abs{A}-1\}$, and for a family $\mathcal F$ of $k$-sets, $\Delta\mathcal F=\bigcup_{A\in\mathcal F}\Delta A=\{B\,:\,\abs{B}=k-1\text{ and }B\subseteq A\text{ for some }A\in\mathcal F\}$. 


\section{The domination number of \texorpdfstring{$G_{3,2}$}{}}\label{sec:32}
In this section we prove \Cref{thm:23,thm:extremal_constructions}. The statements for independent domination are just a rewording of the main results in~\cite{Gruettmueller2009}, and our approach is to adjust the arguments from this paper so that they apply without the independence assumption. The proof in~\cite{Gruettmueller2009} is based on the observation that there is a one-to-one correspondence between independent dominating sets in $G_{3,2}$ and graphs with the property that every edge is contained in a triangle. More precisely, given such a graph $H$ the dominating set in $G_{3,2}$ consists of the 3-sets which are triangles in $H$ and the $2$-sets which are not edges in $H$. Conversely, for a given independent dominating set $D=D_2\cup D_3$, we
obtain the associated graph $H$ by setting $E(H)=\Delta D_3$. Dropping the independence assumption, we still define a graph associated with a dominating set as follows.
\begin{definition}
  Let $D=D_2\cup D_3$ be a dominating set in $G_{3,2}$. The graph $H=H(D)$ has vertex set $[n]$ and edge set $E(H)=\Delta D_3\setminus D_2$.
\end{definition}
The graph $H$ does not necessarily have the property that every edge is contained in a triangle, and going from $D$ to $H(D)$ we are losing some information: in general, it is not possible to reconstruct $D$ from $H$. Nevertheless, we can bound $\abs{D}$ in terms of $H$ and this bound turns out to be sufficient
to prove \Cref{thm:23,thm:extremal_constructions}. 
\begin{lemma}\label{lem:translation}
  Let $H=(V,E)$ be the graph associated with a minimal dominating set $D=D_2\cup D_3$. Let $T$ be the set of triangles in $H$, and let $E_0$ be the set of edges that are not contained in a triangle. Then
  \[\abs{D}\geq\binom{n}{2}-\left(\abs{E}-\abs{T}-\left\lceil\frac12\abs{E_0}\right\rceil\right).\]
\end{lemma}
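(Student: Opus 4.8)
The plan is to read off the two domination requirements as structural facts about $H$ and then bound $\abs{D_2}$ and $\abs{D_3}$ separately, the first exactly and the second from below.

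First I would show that the non-edges of $H$ are precisely the pairs in $D_2$, so that $\abs{D_2}=\binom{n}{2}-\abs{E}$. Indeed, a $2$-set $A\notin D_2$ must be dominated, hence $A\subseteq B$ for some $B\in D_3$, i.e.\ $A\in\Delta D_3$; this gives $\binom{[n]}{2}\setminus\Delta D_3\subseteq D_2$. Since $E(H)=\Delta D_3\setminus D_2$, the complement of $E$ inside $\binom{[n]}{2}$ is $D_2\cup(\binom{[n]}{2}\setminus\Delta D_3)=D_2$, and counting non-edges yields the claimed value of $\abs{D_2}$.

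Next I would prove $\abs{D_3}\geq\abs{T}+\lceil\frac12\abs{E_0}\rceil$. The term $\abs{T}$ comes from the fact recorded before the lemma that the vertex set of every triangle of $H$ lies in $D_3$, so $T\subseteq D_3$. For the second term, fix an edge $xy\in E_0$. As an edge of $H$ it lies in $\Delta D_3$, so some \emph{witness} $\{x,y,z\}\in D_3$ contains it; this witness cannot be a triangle of $H$ (otherwise $xy$ would lie in one), so it belongs to $D_3\setminus T$. Conversely, a $3$-set in $D_3\setminus T$ is not a triangle, so at most two of its three pairs are edges of $H$, and hence it witnesses at most two edges of $E_0$. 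Double counting the incidences $\{(e,B):e\in E_0,\ B\in D_3\setminus T,\ e\subseteq B\}$ then gives $\abs{E_0}\leq 2\abs{D_3\setminus T}$, so $\abs{D_3\setminus T}\geq\lceil\frac12\abs{E_0}\rceil$ and $\abs{D_3}\geq\abs{T}+\lceil\frac12\abs{E_0}\rceil$.

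Combining the two bounds yields $\abs{D}=\abs{D_2}+\abs{D_3}\geq\binom{n}{2}-\abs{E}+\abs{T}+\lceil\frac12\abs{E_0}\rceil$, which is exactly the statement. The one step that needs care is the witness count: one must verify that every $E_0$-edge has a witness lying in $D_3\setminus T$ (a genuine non-triangle, so disjoint from $T$) and that each such witness can be charged by at most two edges of $E_0$, since this is precisely what produces the factor $\frac12$ and the ceiling. Everything else follows directly from the two domination conditions; in particular, minimality of $D$ is not actually needed for this inequality, as enlarging $D$ only increases $\abs{D}$.
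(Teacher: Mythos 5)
Your proof is correct and follows essentially the same route as the paper's: an exact count $\abs{D_2}=\binom{n}{2}-\abs{E}$ via the non-edges, the inclusion $T\subseteq D_3$, and the witness/double-counting argument showing that every edge of $E_0$ has a witness in $D_3\setminus T$ while each such $3$-set covers at most two edges of $E_0$, giving $\abs{D_3\setminus T}\geq\left\lceil\frac12\abs{E_0}\right\rceil$. Your closing observation also agrees with the paper, whose proof indeed only uses that $D$ is dominating --- though the clean justification is that the argument never invokes minimality, rather than your monotonicity remark, since enlarging $D$ changes $H(D)$ and hence the right-hand side of the inequality as well.
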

\begin{proof}
  Since $D$ is a dominating set, $T\subseteq D_3$ and $\binom{[n]}{2}\setminus\Delta D_3\subseteq D_2$.  Moreover, for every $xy\in E_0$ there must be a 3-set in $D_3\setminus T$ containing $x$ and $y$. Since any such 3-set covers at most 2 elements of $E_0$, we obtain $\abs{D_3\setminus
  T}\geq\left\lceil\frac12\abs{E_0}\right\rceil$. Putting everything together,
  \[\abs{D}=\abs{D_3}+\abs{D_2}=\abs{T}+\abs{D_3\setminus T}+\binom{n}{2}-\abs{\Delta D_3}+\abs{D_2\cap\Delta D_3}\geq \abs{T}+\left\lceil\frac12\abs{E_0}\right\rceil+\binom{n}{2}-\abs{E}.\qedhere\]
\end{proof}
As a consequence, an upper bound on $\abs{E}-\abs{T}-\left\lceil\frac12\abs{E_0}\right\rceil$ for arbitrary $H$ implies a lower bound for $\gamma(G_{3,2})$. Note that we only need the lower bound because $\gamma(G_{3,2})\leq i(G_{3,2})\leq\binom{n}{2}-\left\lfloor\frac{(n+1)^2}{8}\right\rfloor$ is immediate by looking at the dominating sets corresponding to the graphs listed in \Cref{thm:extremal_constructions}. 
\begin{lemma}\label{lem:main_inequality}
  Let $H=(V,E)$ be a graph with vertex set $V=[n]$. Let $T$ be the set of triangles in $H$, and let $E_0$ be the set of edges that are not contained in a triangle. Then
  \begin{equation}\label{eq:edges_minus_triangles_bound}
    \abs{E}-\abs{T}-\frac12\abs{E_0}\leq\left\lfloor\frac{(n+1)^2}{8}\right\rfloor.
  \end{equation}
  Moreover, equality is possible only if $E_0=\emptyset$ or ($n\equiv 1\pmod 4$ and $\abs{E_0}$ is even).
\end{lemma}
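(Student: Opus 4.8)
The plan is to prove \eqref{eq:edges_minus_triangles_bound} by induction on $n$, deleting one well-chosen vertex and tracking how $g(H):=\abs{E}-\abs{T}-\frac12\abs{E_0}$ changes. First I would record the exact effect of deleting a vertex $v$. Writing $d_0(v)$ for the number of edges at $v$ that lie in no triangle, and $b(v)$ for the number of edges $xy$ with $x,y\neq v$ all of whose triangles pass through $v$ (so that $xy$ becomes triangle-free in $H-v$), a direct bookkeeping of edges, triangles, and triangle-free edges gives
\[
  g(H)-g(H-v)=d(v)-t(v)-\tfrac12 d_0(v)+\tfrac12 b(v).
\]
Since every edge counted by $b(v)$ lies in $N(v)$ and these are distinct edges of $H[N(v)]$, we have $0\le b(v)\le t(v)$. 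The induction then closes provided we can always find a vertex whose increment is at most the discrete target $\Delta_n:=\lfloor(n+1)^2/8\rfloor-\lfloor n^2/8\rfloor$ (which is $\lfloor n/4\rfloor$ for $n\equiv 0,1$ and $\lfloor n/4\rfloor+1$ for $n\equiv 2,3\pmod 4$): indeed $g(H-v)\le\lfloor n^2/8\rfloor$ by the inductive hypothesis, whence $g(H)\le\lfloor(n+1)^2/8\rfloor$.

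Using $b(v)\le t(v)$ one gets the convenient bound $g(H)-g(H-v)\le d(v)-\frac12 t(v)-\frac12 d_0(v)$, but this is too weak by itself: averaging $d(v)-\frac12 t(v)$ over $v$ produces $\frac1n\bigl(2\abs{E}-\tfrac32\abs{T}\bigr)$, which already exceeds $\Delta_n$ for the extremal graphs $K^+_{s,n-s}$. The reason is that $b(v)\le t(v)$ is wasteful exactly at the vertices one must \emph{not} delete: removing a matched vertex of $K^+_{s,n-s}$ turns all of its partner's bipartite edges triangle-free (so $b(v)$ is large), whereas removing a vertex of the independent side has $b(v)=0$ and realises the increment $\Delta_n$ with equality.

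Consequently the heart of the argument—and the step I expect to be the main obstacle—is to show that a \emph{good} vertex always exists: one with $b(v)$ small (ideally $b(v)=0$, i.e.\ $v$ is not the unique triangle-completion of any edge) and with $d(v)-t(v)-\frac12 d_0(v)\le\Delta_n$. This cannot follow from a single global average; it requires a structural case analysis of the local configurations, distinguishing graphs that have a vertex of small degree (delete it directly) from denser situations, where $\sum_v t(v)=3\abs{T}$ forces a vertex of high triangle-count for which $-t(v)$ makes $d(v)-\frac12 t(v)$ small or negative even in the worst case $b(v)=t(v)$. I would organise the analysis around the vertices that are ``sole completers'' of edges, bounding their number and degrees, and treating the remaining small configurations by hand.

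Finally, for the equality clause I would rerun the inductive argument with every inequality forced to equality and track which local structures permit this. Simultaneous tightness in $0\le b(v)\le t(v)$ and in the increment is very restrictive: it forces the $K^+_{s,n-s}$ shape with $s$ near $(n+1)/2$, and it allows a surviving triangle-free part only when $s$ is odd, that is when $n\equiv 1\pmod 4$, in which case the triangle-free edges form a star of even size---precisely the exceptional case $E_0\neq\emptyset$ with $\abs{E_0}$ even asserted in the statement. The small base cases $n\le 9$, which house the sporadic extremal graphs $H_{5a},H_{5b},H_9$, would be checked separately against the bound.
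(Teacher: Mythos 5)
Your bookkeeping is correct: the deletion identity $g(H)-g(H-v)=d(v)-t(v)-\tfrac12 d_0(v)+\tfrac12 b(v)$, the bound $0\le b(v)\le t(v)$, and the values of $\Delta_n$ are all right, and you correctly diagnose that no averaging argument can close the induction. But what you have written is a plan, not a proof: the statement that \emph{every} graph on $n$ vertices contains a vertex of increment at most $\Delta_n$ is exactly the content of the lemma in your approach, and it is left unproven. Note that this claim does not follow from the lemma itself (the lemma bounds a global quantity, while your claim is a universally quantified local statement; if it fails for a single graph $H$, extremal or not, the induction collapses there), and the margins are razor-thin. A concrete stress test: consider graphs in which every edge lies in exactly one triangle (equivalently, every neighborhood induces a perfect matching). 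There $d_0(v)=0$, $t(v)=d(v)/2$ and $b(v)=t(v)$ for \emph{every} vertex, so every increment equals $\tfrac32 t(v)=\tfrac34 d(v)$, and your induction needs a vertex of degree roughly at most $n/3$; the lemma itself only caps the average degree of such graphs at about $3n/8$, so there is a genuine window. This family can be partially rescued by noting that adjacent vertices have exactly one common neighbor, whence $d(x)+d(y)+d(z)\le n+3$ for every triangle and some vertex has increment at most $\tfrac{n+3}{4}$ --- but $\tfrac{n+3}{4}>\Delta_n$ in every residue class mod $4$, so even here one needs additional integrality and parity arguments (an increment of $\tfrac{n}{4}+\tfrac12$ is conceivable when $n\equiv 0\pmod 4$), and this is only one family: graphs mixing edges of all three types (triangle-free, uniquely completed, multiply completed) require the full case analysis that you defer with the phrase ``structural case analysis of local configurations.'' That phrase names the entire difficulty; it does not address it.

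The equality clause has a second structural gap: tracking equality through an induction requires structural information about near-extremal graphs on $n-1$ vertices, since knowing only $g(H-v)\le\lfloor n^2/8\rfloor$ yields nothing about the shape of $H-v$. So your induction hypothesis would have to be strengthened to a characterization of the (near-)extremal graphs --- essentially \Cref{thm:extremal_constructions} proved simultaneously with the lemma, sporadic graphs $H_{5a},H_{5b},H_9$ included --- which is a far larger undertaking than the lemma itself. The paper's proof is entirely different and avoids induction: it fixes each triangle $xyz$, classifies the remaining vertices by their number of neighbors in $\{x,y,z\}$, sums the resulting identity over all triangles, and completes the square in the degrees to reach \eqref{eq:final_inequality}, an inequality with explicit nonnegative slack terms $\abs{E_0}+\alpha+\beta+\tfrac12\gamma$. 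Both the bound \eqref{eq:edges_minus_triangles_bound} and the parity statement for $n\equiv 1\pmod 4$ then fall out of a short mod-$4$ check, and the slack inequality (in the form \eqref{eq:n_1_base}) becomes the main tool for the extremal characterization. So even if your induction could be completed, it would deliver less than the paper's argument does.
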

\begin{proof}
  Fix a triangle $xyz\in T$. For $i\in\{0,1,2,3\}$, let $a_i$ be the number of vertices in $V\setminus\{x,y,z\}$ with exactly $i$ neighbors in $\{x,y,z\}$. Then
  \begin{align}
      a_0+a_1+a_2+a_3 &= n-3,\label{eq:vertices}\\
      a_1+2a_2+3a_3 &= (d(x)-2)+(d(y)-2)+(d(z)-2),\label{eq:edges}\\
      a_2+3a_3 &=  (t(xy)-1)+(t(yz)-1)+(t(xz)-1).\label{eq:triangles}
  \end{align}
  where \eqref{eq:vertices} comes from counting the vertices in $V\setminus\{x,y,z\}$, \eqref{eq:edges} from counting the edges between $\{x,y,z\}$ and $V\setminus\{x,y,z\}$, and \eqref{eq:triangles} from counting the triangles with exactly two vertices in $\{x,y,z\}$. The difference of \eqref{eq:edges} and \eqref{eq:triangles} provides an expression for $a_1+a_2$, and equating this with $a_1+a_2=n-3-(a_0+a_3)$ (from \eqref{eq:vertices}), we get
 \[(d(x)-2) + (d(y)-2) + (d(z)-2)\\ - (t(xy)-1) - (t(yz)-1) - (t(xz)-1)  = n - 3 - (a_0+a_3), \]
  We set $\alpha_{xyz}=a_0+a_3$ (the number of vertices in $V\setminus\{x,y,z\}$ which have 0 or 3 neighbors in $\{x,y,z\}$) and rearrange the above to obtain
  \[d(x)+d(y)+d(z)-(t(xy)-1)-(t(yz)-1)-(t(xz)-1)=n+3-\alpha_{xyz}.\]
  Taking the sum over $T$ and setting $\alpha=\sum_{xyz\in T}\alpha_{xyz}$,
  \[\sum_{x\in V}t(x)d(x)-\sum_{xy\in E}t(xy)\left(t(xy)-1\right)=(n+3)\abs{T}-\alpha\]  
  Substituting
  \[t(x)=\frac12\sum_{y\in N(x)}t(xy)=\frac12d(x)+\frac12\sum_{y\in N(x)}\left(t(xy)-1\right)\]
  and subtracting $\sum_{xy\in E}\left(t(xy)-1\right)=3\abs{T}-\abs{E}$, we obtain
  \begin{equation}\label{eq:first_step}
    \frac12\sum_{x\in V}d(x)^2+\sum_{xy\in E}\left(t(xy)-1\right)\left(\frac{d(x)+d(y)}{2}-t(xy)-1\right)=n\abs{T}+\abs{E}-\alpha.
  \end{equation}
  For every $xy\in E$
  \[t(xy)=\abs{N(x)\cap N(y)}\leq\min\{d(x)-1,d(y)-1\}\leq\frac{d(x)+d(y)}{2}-1.\]
  As a consequence, in the second sum on the left-hand side of~(\ref{eq:first_step}), only the terms for $xy\in E_0$ can be negative, and it follows that
  \[\frac12\sum_{x\in V}d(x)^2-\sum_{xy\in E_0}\left(\frac{d(x)+d(y)}{2}-1\right)=n\abs{T}+\abs{E}-\alpha-\beta\]
  where (with $E_1=E\setminus E_0$ being the set of edges contained in at least one triangle)
  \[\beta=\sum_{xy\in E_1}\left(t(xy)-1\right)\left(\frac{d(x)+d(y)}{2}-t(xy)-1\right)\geq 0.\]
  With $d(x)+d(y)\leq n$ for every $xy\in E_0$, we obtain
  \begin{equation}\label{eq:second_step}
    \frac12\sum_{x\in V}d(x)^2-\frac{n}{2}\abs{E_0}\leq n\abs{T}+\abs{E}-\abs{E_0}-\alpha-\beta.
  \end{equation}
  Next we substitute
  \begin{multline*}
    \sum_{x\in V}d(x)^2=\sum_{x\in
      V}\left[(n+1)d(x)-\frac{(n+1)^2}{4}+\left(\frac{n+1}{2}-d(x)\right)^2\right]\\
    =2(n+1)\abs{E}-\frac{n(n+1)^2}{4}+\sum_{x\in V}\left(\frac{n+1}{2}-d(x)\right)^2
  \end{multline*}
  into~(\ref{eq:second_step}) and rearrange the result:
  \begin{equation} \label{eq:final_inequality}
    \abs{E}-\abs{T}-\frac12\abs{E_0}\leq\frac{(n+1)^2}{8}-\frac{1}{n}\left(\abs{E_0}+\alpha+\beta+\frac12\gamma\right)
  \end{equation}
  with $\gamma=\sum_{x\in V}\left(\frac{n+1}{2}-d(x)\right)^2\geq 0$. For $n\equiv 3\pmod 4$, $\frac{(n+1)^2}{8}$ is an integer, hence \eqref{eq:edges_minus_triangles_bound} follows with $\abs{E_0}=\alpha=\beta=\gamma=0$ being necessary for equality. For $n\equiv 0\pmod 2$, $\left\lfloor\frac{(n+1)^2}{8}\right\rfloor=\frac{(n+1)^2}{8}-\frac18$, and with 
  \[\gamma=\sum_{x\in V}\left(\frac{n+1}{2}-d(x)\right)^2\geq\frac{n}{4},\]
  inequality \eqref{eq:edges_minus_triangles_bound} follows from \eqref{eq:final_inequality}, and $\abs{E_0}=\alpha=\beta=0$ is necessary for equality. For $n\equiv 1\pmod 4$, if $E_0=\emptyset$, then \eqref{eq:edges_minus_triangles_bound} follows by rounding \eqref{eq:final_inequality}. If $E_0\neq \emptyset$, then the right-hand side of \eqref{eq:final_inequality} is strictly less than $\frac{(n+1)^2}{8}=\left\lfloor\frac{(n+1)^2}{8}\right\rfloor+\frac12$, and \eqref{eq:edges_minus_triangles_bound} follows with a strict inequality if $\abs{E_0}$ is odd.
 \end{proof}
\Cref{thm:23} is an immediate consequence of \Cref{lem:translation,lem:main_inequality}. To prove \Cref{thm:extremal_constructions}, we start with the observation that the optimal independent dominating sets have been described completely in~\cite{Gruettmueller2009}. It remains to check whether there are any additional dominating sets of size $\gamma(G_{3,2})$, such that $E_0\neq\emptyset$ in the associated graph.
\begin{proof}[Proof of \Cref{thm:extremal_constructions}]
  Let $H$ be the graph associated with $D$. For $n\not\equiv 1\pmod 4$, $E_0=\emptyset$, so that $D$ is an independent set, and the result follows from~\cite{Gruettmueller2009}. The case $n\equiv 1\pmod 4$ remains. That this might get a bit more complicated is indicated by the observation that this is the only case where one of the graphs listed in \Cref{thm:extremal_constructions} satisfies $E_0\neq\emptyset$: For $H=K_{(n+1)/2,(n-1)/2}$ there is one isolated vertex, say vertex 1, in the subgraph induced by $1,2,\dots,(n+1)/2$, and in $H$ this vertex is adjacent to every vertex in the independent set $\{(n+3)/2,\dots,n\}$. So $E_0=\{1x\,:\,x=(n+3)/2,\dots,n\}$, and an optimal dominating set corresponding to $H$ is given by
  \begin{align*}
    D_3 &= T\cup\left\{1xy\,:\,x=(n+1)/2+i,\,y=n-i,\,i\in\{1,\dots,(n-1)/4\}\right\},\\
    D_2 &= \binom{[n]}{2}\setminus E\cup\left\{xy\,:\,x=(n+1)/2+i,\,y=n-i,\,i\in\{1,\dots,(n-1)/4\}\right\}.
  \end{align*}
  Our proof that the list for $n\equiv 1\pmod 4$ is complete follows closely the corresponding proof in~\cite{Gruettmueller2009}. Some small modifications are needed to take into account the possibility that $E_0\neq\emptyset$. The case $n=5$ can be treated by hand, and from now on we assume $n\geq 9$. In the following, we will formulate a sequence of claims providing more and more information about the structure of $H$, eventually allowing us to show that $H$ must be isomorphic to one of the graphs listed in the theorem. To keep the presentation of our main argument reasonably short, we postpone the proofs of the claims (some of which are a bit tedious)
  to \Cref{app:claims}. In view of \eqref{eq:final_inequality} we start from
  \begin{equation}\label{eq:n_1_base}
    \abs{E_0}+\alpha+\beta+\frac12\gamma\leq\frac{n}{2}.
  \end{equation}
  Let $M$ be the set of edges defined by
  \[M=\left\{xy\in E\,:\,N(x)\setminus\{y\}=N(y)\setminus\{x\}\right\}=\left\{xy\in E\,:\,t(xy)=\frac{d(x)+d(y)}{2}-1\right\}.\]
  \begin{restatable}{claim}{Mnonempty}\label{C:M_non-empty}
    If $M=\emptyset$ then $n=9$ and $H\cong H_9$.
  \end{restatable}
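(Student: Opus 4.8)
The plan is to push everything through the budget inequality~\eqref{eq:n_1_base}, $\abs{E_0}+\alpha+\beta+\frac12\gamma\leq\frac n2$, exploiting that $M=\emptyset$ forces the quantity $f(xy):=\frac{d(x)+d(y)}{2}-t(xy)-1$ to satisfy $f(xy)\geq\frac12$ on every edge: indeed $f$ takes values in $\frac12\nats$ and, by the definition of $M$, vanishes precisely on the edges of $M$.

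The decisive step is to establish the rigid regime $E_0=\emptyset$ and $t(xy)=1$ for every edge (equivalently $\beta=0$), and I expect this to be the main obstacle. It cannot be read off from the budget alone: a vertex of the odd degree $\frac{n+1}{2}$ contributes nothing to $\gamma$, so without control of the local triangle structure the terms $\alpha,\beta,\abs{E_0}$ are free to absorb the entire budget and $n$ stays unbounded. Even the crude estimate $\sum_{xy\in E}f(xy)\geq\frac12\abs{E}$, combined with the extremal relation, only yields $\abs{E}\geq\frac{(n-1)(n-3)(n+3)}{4(2n-7)}$, which is consistent with the true edge count for all large $n$. The rigidity must instead be extracted structurally from $M=\emptyset$: I would rule out an edge with $t(xy)\geq2$, a copy of $K_4$, and an edge of $E_0$ one at a time, each time either producing a pair of adjacent vertices with equal punctured neighbourhoods (contradicting $M=\emptyset$) or charging more than the available budget to $\alpha+\beta+\abs{E_0}$. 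Carrying out this local dichotomy, together with the finitely many borderline configurations it leaves, is exactly the tedious case analysis to be deferred to the appendix.

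Once rigidity is in hand the count is short and decisive. If every edge lies in exactly one triangle then the edges incident to any vertex $v$ pair off into the triangles through $v$, so every degree is even; since $n\equiv1\pmod4$ makes $\frac{n+1}{2}$ odd, each vertex deviates from $\frac{n+1}{2}$ by at least~$1$ and hence $\gamma\geq n$. Combined with $\frac12\gamma\leq\frac n2$ this gives $\gamma=n$, so $\alpha=\beta=\abs{E_0}=0$ and every degree equals $\frac{n-1}{2}$ or $\frac{n+3}{2}$. Writing $p$ for the number of vertices of degree $\frac{n+3}{2}$ one finds $\abs{E}=\frac{n(n-1)}{4}+p$, while the triangle partition gives $3\abs{T}=\abs{E}$ and the extremal identity in the case $E_0=\emptyset$ gives $\abs{E}-\abs{T}=\frac23\abs{E}=\frac{(n-1)(n+3)}{8}$. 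Eliminating $\abs{E}$ and $\abs{T}$ yields $p=\frac{(n-1)(9-n)}{16}\geq0$, so $n\leq9$; with the standing assumption $n\geq9$ this forces $n=9$ and $p=0$, that is, $H$ is $4$-regular.

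It remains to pin down $H$ for $n=9$. Here $H$ is a $4$-regular graph on $9$ vertices whose $18$ edges split into $6$ triangles, each vertex lying in exactly two of them; moreover $\beta=0$ forbids two triangles from completing a $K_4$, and $\alpha=0$ forces every triangle to dominate $V$, which (as each triangle already covers itself and the six partners supplied by the second triangles at its vertices) means these six partners are all distinct. I would finish by checking that these constraints have a unique solution up to isomorphism, namely $H_9$; since the data are so small, this is a direct finite verification. All the real work is thus concentrated in the rigidity step of the second paragraph.
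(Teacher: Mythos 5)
There is a genuine gap: the step you yourself call decisive---that extremality together with $M=\emptyset$ forces $E_0=\emptyset$ and $t(xy)=1$ for every edge---is never proved; you defer it as ``tedious case analysis,'' and the local dichotomy you sketch for it would not go through. A single edge $xy$ with $t(xy)=2$ charges only $(t(xy)-1)\bigl(\frac{d(x)+d(y)}{2}-t(xy)-1\bigr)\geq\frac12$ to $\beta$, and a single edge of $E_0$ charges only $1$ to the budget $\abs{E_0}+\alpha+\beta+\frac12\gamma\leq\frac n2$, so no individual bad configuration ``charges more than the available budget''; nor does such a configuration, by itself, produce two adjacent vertices with equal punctured neighbourhoods. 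Whatever rigidity holds in the extremal graph is a \emph{global} consequence of extremality, so ruling out the bad configurations ``one at a time'' presupposes exactly the counting argument that still has to be supplied. As it stands, the heart of the claim is missing.

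Moreover, your reason for abandoning the budget route is mistaken: the paper's proof is nothing but a budget computation, and it needs no rigidity at all. Your ``crude estimate'' bounds the unweighted sum of $f(xy)=\frac{d(x)+d(y)}{2}-t(xy)-1$ over all edges, which is not the quantity occurring in $\beta$; keeping the weight $t(xy)-1$, the hypothesis $M=\emptyset$ (i.e.\ $f(xy)\geq\frac12$ on every edge) gives
\[\beta=\sum_{xy\in E_1}\left(t(xy)-1\right)f(xy)\geq\frac12\sum_{xy\in E_1}\left(t(xy)-1\right)=\frac32\abs{T}-\frac12\abs{E_1},\]
while the elementary bound $\gamma\geq\sum_{x\in V}\left(\frac{n+1}{2}-d(x)\right)=\frac{n(n+1)}{2}-2\abs{E}$ ties $\gamma$ to the edge count (this disposes of your worry about degree-$\frac{n+1}{2}$ vertices: they contribute nothing to $\gamma$ but everything to the degree sum, which is what gets used). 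Plugging both into \eqref{eq:n_1_base} yields $\abs{E_1}-\abs{T}\geq\frac{n(n-1)}{6}$, and combining with the extremality identity $\abs{E_1}-\abs{T}+\frac12\abs{E_0}=\frac{(n-1)(n+3)}{8}$ gives $(n-1)(n-9)+12\abs{E_0}\leq 0$, hence $n=9$ and $E_0=\emptyset$ in one stroke; $H\cong H_9$ then follows from the classification of optimal independent dominating sets in the cited earlier work. Your third-paragraph count (even degrees, $\gamma\geq n$, $p=\frac{(n-1)(9-n)}{16}$, so $n\leq 9$) is correct and would be a pleasant finish, but it sits entirely downstream of the unproven rigidity step, so the proposal does not prove the claim.
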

  From now on, we assume $M\neq\emptyset$. For $xy\in E$, set $V_1(xy)=N(x)\cap N(y)=\{z\,:\,xyz\in T\}$.
  \begin{restatable}{claim}{largedegreeweak}\label{C:large_degree_weak}
    For all $xy\in M$, $d(x)\geq\frac{n-1}{2}+\abs{E(V_1(xy))}$.
  \end{restatable}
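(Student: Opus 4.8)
The plan is to use that $xy\in M$ forces $x$ and $y$ to be twins, and then to bound the local contributions of the triangles on $xy$ against the global budget in \eqref{eq:n_1_base}. First I would unwind the definition of $M$: writing $S=V_1(xy)=N(x)\cap N(y)$, the condition $N(x)\setminus\{y\}=N(y)\setminus\{x\}$ gives $N(x)=S\cup\{y\}$ and $N(y)=S\cup\{x\}$, so $d(x)=d(y)=\abs S+1$ and $t(xy)=\abs S=d(x)-1$. Moreover, for every $z\in S$ one has $t(xz)=t(yz)=d_S(z)+1$, where $d_S(z)=\abs{N(z)\cap S}$ is the degree of $z$ in the induced subgraph $H[S]$; in particular $\sum_{z\in S}d_S(z)=2\abs{E(V_1(xy))}$, so the edges inside the common neighbourhood are exactly the reason some of these $t(xz)$ exceed $1$.

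Next I would reuse the per-triangle identity established in the proof of \Cref{lem:main_inequality},
\[d(x)+d(y)+d(z)-(t(xy)-1)-(t(yz)-1)-(t(xz)-1)=n+3-\alpha_{xyz},\]
applied to each triangle $xyz$ with $z\in S$. Substituting the values above collapses it to $\alpha_{xyz}=n+1-d(x)-d(z)+2d_S(z)$. Since every $\alpha_{xyz}\geq0$, we have $\alpha\geq\sum_{z\in S}\alpha_{xyz}$. In parallel, the edges $xz$ and $yz$ with $z\in S$ lie in the triangle $xyz$, so each contributes the nonnegative amount $(t(xz)-1)\bigl(\tfrac{d(x)+d(z)}2-t(xz)-1\bigr)=d_S(z)\bigl(\tfrac{d(x)+d(z)}2-d_S(z)-2\bigr)$ to $\beta$, and the vertices $x$, $y$ and the $z\in S$ contribute $(\tfrac{n+1}2-d(x))^2$ and $\tfrac12(\tfrac{n+1}2-d(z))^2$ to $\tfrac12\gamma$.

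I would then feed all of these localized lower bounds into \eqref{eq:n_1_base}, using $\abs{E_0}\geq0$ and the nonnegativity of the remaining parts of $\alpha,\beta,\gamma$. After the substitutions $\abs S=d(x)-1$ and $\sum_{z\in S}d_S(z)=2\abs{E(V_1(xy))}$, the inequality $\abs{E_0}+\alpha+\beta+\tfrac12\gamma\leq\tfrac n2$ becomes a relation between $d(x)$, the unknown degrees $d(z)$ of the common neighbours, and $\abs{E(V_1(xy))}$. The additive $\abs{E(V_1(xy))}$ appears because both the $a_3$-part of the $\alpha_{xyz}$ and, more forcefully, the $\beta$-contributions of $xz$ and $yz$ are driven by $\sum_{z}d_S(z)=2\abs{E(V_1(xy))}$; rearranging for $d(x)$ and rounding (using that $n\equiv1\pmod4$ makes $\tfrac{n-1}2$ an integer) is designed to yield $d(x)\geq\tfrac{n-1}2+\abs{E(V_1(xy))}$.

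The main obstacle is that the degrees $d(z)$ of the common neighbours enter with competing signs: a large $d(z)$ shrinks $\alpha_{xyz}$ but is penalized by the $\gamma$-term and inflates the $\beta$-contribution of $xz$ and $yz$. Thus the clean bound only emerges after minimizing the combined expression over the $d(z)$ vertex by vertex and checking that, under the standing assumption $n\geq9$, the worst case still leaves the constant $\tfrac{n-1}2$ intact while producing the full additive $\abs{E(V_1(xy))}$. This optimization is the tedious part of the argument; everything else is the bookkeeping of the twin structure and the substitutions above.
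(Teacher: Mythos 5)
Your route is genuinely different from the paper's. The paper's proof is a short deletion argument: for $xy\in M$ it sets $H'=H-\{x,y\}$, computes exactly how the relevant quantities change, namely $\abs{E'}=\abs{E}-2d(x)+1$, $\abs{T'}=\abs{T}-(d(x)-1)-2\abs{E(V_1(xy))}$ and $\abs{E_0'}\leq\abs{E_0}+\abs{E(V_1(xy))}$, then applies the extremal inequality \eqref{eq:edges_minus_triangles_bound} of \Cref{lem:main_inequality} to the $(n-2)$-vertex graph $H'$ and compares with the optimality of $H$; the additive term $\abs{E(V_1(xy))}$ enters through the $2\abs{E(V_1(xy))}$ triangles that contain exactly one of $x,y$. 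Your proposal instead localizes contributions to $\alpha$, $\beta$, $\gamma$ inside the budget \eqref{eq:n_1_base}, and all of your preparatory formulas are correct: with $S=V_1(xy)$ and $s=d_S(z)$ one indeed has $t(xz)=t(yz)=s+1$, $\alpha_{xyz}=n+1-d(x)-d(z)+2s$, and the stated $\beta$- and $\gamma$-contributions.

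The gap is that the decisive step is only asserted ("this optimization is the tedious part"), and the endgame you anticipate is not how the computation actually closes: there is no vertex-by-vertex trade-off that leaves $\frac{n-1}{2}$ intact while "producing the full additive $\abs{E(V_1(xy))}$". What actually happens is a dichotomy. If some $z\in S$ has $s=d_S(z)\geq 1$, then using $d(x)\geq s+2$ and $d(z)\geq s+2$,
\[\alpha_{xyz}+\beta_{xz}+\beta_{yz}=n+1+(s-1)\bigl(d(x)+d(z)\bigr)-2s^2-2s\geq n-3>\frac{n}{2},\]
which contradicts \eqref{eq:n_1_base} outright; so $E(V_1(xy))=\emptyset$ is forced (this is essentially \Cref{C:V1_xy_indep}, proved without its extra degree hypothesis — your $\alpha$-terms compensate for the missing assumption that the paper's proof of that claim relies on). Only then, with all $d_S(z)=0$, does the smooth estimate run: each $z\in S$ contributes $\alpha_{xyz}+\frac12\left(\frac{n+1}{2}-d(z)\right)^2\geq\frac{n+1}{2}-d(x)-\frac12$, the pair $x,y$ contributes $\left(\frac{n+1}{2}-d(x)\right)^2$ to $\frac12\gamma$, and writing $v=\frac{n+1}{2}-d(x)$, inequality \eqref{eq:n_1_base} yields $\frac{n}{2}\geq\frac{nv}{2}-\frac{n-1}{4}$, hence $v\leq 1$, i.e.\ $d(x)\geq\frac{n-1}{2}$; the claim then holds trivially since the additive term is zero. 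So your plan is completable, but as written it omits the entire closing argument and mispredicts its structure; the paper's deletion argument reaches the same conclusion in three lines and keeps the additive term meaningful rather than forcing it to vanish.
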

  \begin{restatable}{claim}{largedegreestrong}\label{C:large_degree_strong}
    If there exists $xy\in M$ with $d(x)=\frac{n-1}{2}+\abs{E(V_1(xy))}$ then $H$ is isomorphic to $K^+_{(n+3)/2,(n-3)/2}$.
  \end{restatable}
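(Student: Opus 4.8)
The plan is to show that equality in \Cref{C:large_degree_weak} is rigid enough to determine $H$ completely. Fix an edge $xy\in M$ with $d(x)=\tfrac{n-1}{2}+\abs{E(W)}$, where $W=V_1(xy)$, and write $d=d(x)=d(y)$ and $U=V\setminus(\{x,y\}\cup W)$. From $\abs{W}=d-1$ and $\abs{U}=n-d-1$ the equality is equivalent to $\abs{U}=\abs{W}+1-2\abs{E(W)}$. Since $D$ is a minimum dominating set, \eqref{eq:edges_minus_triangles_bound} holds with equality, so \eqref{eq:n_1_base} is available: the nonnegative quantities $\abs{E_0}$, $\alpha$, $\beta$ and $\tfrac12\gamma$ together consume a budget of at most $\tfrac{n}{2}$, and the whole proof is about showing that only the configuration of $K^+_{(n+3)/2,(n-3)/2}$ respects this budget under the equality hypothesis.

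First I would analyse the triangles $xyz$ with $z\in W$. Because $xy\in M$, no vertex of $U$ is adjacent to $x$ or $y$ and every vertex of $W\setminus\{z\}$ is adjacent to both, so a vertex contributes to $\alpha_{xyz}$ precisely when it lies in $U$ and misses $z$, or lies in $W$ and sees $z$; hence $\alpha_{xyz}=(\abs{U}-\abs{N(z)\cap U})+\abs{N(z)\cap W}$. Summing over $z\in W$ yields $\sum_{z\in W}\alpha_{xyz}=2\abs{E(W)}+\bigl(\abs{W}\abs{U}-\abs{E(W,U)}\bigr)$, namely twice the number of edges inside $W$ plus the number of non-edges between $W$ and $U$. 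The key step, which I expect to be the main obstacle, is to combine this expression with a sharp enough lower bound on $\tfrac12\gamma$---using that the deviations $\tfrac{n+1}{2}-d(v)$ are integers and that $d(x)=d(y)$ is forced by the equality---to conclude that the budget leaves no room for this contribution, i.e.\ that $\abs{E(W)}=0$ and $W$--$U$ is complete bipartite. This is exactly the tightness analysis behind \Cref{C:large_degree_weak}, and eliminating the a priori admissible ``false equality'' values $\abs{E(W)}\in\{1,2\}$ is where the tedious degree bookkeeping is concentrated.

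Granting that $W$ is independent and $W$--$U$ is complete bipartite, the rest is clean. Then $d=\tfrac{n-1}{2}$, $\abs{W}=\tfrac{n-3}{2}$, $\abs{U}=\tfrac{n-1}{2}$ (an even number), each $z\in W$ has degree $\tfrac{n+3}{2}$, and each $u\in U$ has $d(u)=\tfrac{n-3}{2}+d_U(u)$ with $d_U(u):=\abs{N(u)\cap U}$. The $x,y$ and $W$ degrees already give $\tfrac12\gamma\geq\tfrac{n+1}{4}$, hence $\alpha\leq\tfrac{n-1}{4}$ and $\beta\leq\tfrac{n-1}{4}$. A triangle inside $U$ would have $x$, $y$ and all of $W$ among the vertices with $0$ or $3$ neighbours in it (using complete bipartiteness), contributing at least $\abs{W}+2=\tfrac{n+1}{2}>\tfrac{n-1}{4}$ to $\alpha$; so $U$ is triangle-free. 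For a $U$-edge $uu'$ this gives $t(uu')=\abs{W}$ and $\tfrac{d(u)+d(u')}{2}-t(uu')-1=\tfrac{d_U(u)+d_U(u')}{2}-1$, so its $\beta$-contribution is $\tfrac{n-5}{2}\bigl(\tfrac{d_U(u)+d_U(u')}{2}-1\bigr)$. A vertex $u$ with $d_U(u)\geq 2$ would make two such contributions total at least $\tfrac{n-5}{2}>\tfrac{n-1}{4}$ (for $n\geq 13$; $n=9$ is checked by hand), contradicting the bound on $\beta$; and a vertex with $d_U(u)=0$ lies in no triangle, so all $\abs{W}$ edges at it lie in $E_0$, forcing $\abs{E_0}\geq\abs{W}$ and pushing the budget above $\tfrac{n}{2}$. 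Hence $d_U(u)=1$ throughout and $U$ induces a perfect matching.

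Finally I would reassemble. Put $[s]:=\{x,y\}\cup U$ of size $\tfrac{n+3}{2}$ and $[s+1,n]:=W$ of size $\tfrac{n-3}{2}$. Then $[s]$ induces a perfect matching ($xy$ together with the matching on $U$, with no edges between $\{x,y\}$ and $U$), $W$ is independent, and every edge between $[s]$ and $W$ is present because $x$, $y$ and all vertices of $U$ are adjacent to all of $W$. This is precisely $K^+_{(n+3)/2,(n-3)/2}$, so $H\cong K^+_{(n+3)/2,(n-3)/2}$, as claimed.
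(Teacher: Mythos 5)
Your proposal has a genuine gap, and it sits exactly where the content of the claim is: you never prove that $W=V_1(xy)$ is independent and that all $W$--$U$ pairs are edges. You explicitly label this ``the key step'' and ``the main obstacle'' and then proceed by ``granting'' it; but that granted statement essentially \emph{is} the claim, since your subsequent bookkeeping (which is correct, modulo deferring $n=9$ to a hand check) reconstructs $K^+_{(n+3)/2,(n-3)/2}$ in a few lines. Moreover, the budget argument you sketch for the missing step does not suffice as stated. Consider the configuration $\abs{E(W)}=1$, say $E(W)=\{uv\}$, with $W$--$U$ complete bipartite: it is consistent with your equality hypothesis (then $d(x)=d(y)=\frac{n+1}{2}$), yet in it $x$ and $y$ contribute nothing to $\gamma$, every $z\in W\setminus\{u,v\}$ has degree exactly $\frac{n+1}{2}$ and also contributes nothing, your identity $\sum_{z\in W}\alpha_{xyz}=2\abs{E(W)}+\bigl(\abs{W}\abs{U}-\abs{E(W,U)}\bigr)$ yields only $\alpha\geq 2$, and the edge $uv$ contributes $0$ to $\beta$ because $t(uv)=\frac{d(u)+d(v)}{2}-1$ there. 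So none of the estimates you put on the table pushes $\abs{E_0}+\alpha+\beta+\frac12\gamma$ above the budget $\frac{n}{2}$ of \eqref{eq:n_1_base}; eliminating such configurations needs substantially more than integrality of the degree deviations.

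The paper's proof is two lines and rests on a mechanism your proposal misses entirely: recursion into the already-established characterization. Equality in \Cref{C:large_degree_weak} means the inequality chain in its proof is tight, so $H'=H-\{x,y\}$ satisfies $\abs{E'}-\abs{T'}-\frac12\abs{E'_0}=\frac{(n-1)^2}{8}$, i.e.\ $H'$ is itself an extremal graph on $n-2$ vertices. Since $n-2\equiv 3\pmod 4$, \Cref{lem:main_inequality} forces $E'_0=\emptyset$, and the known characterization for that residue class (\cite[Theorem 3]{Gruettmueller2009}, the same result already invoked for $n\not\equiv 1\pmod 4$) gives $H'\cong K^+_{(n-1)/2,(n-3)/2}$. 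Tracking the tight chain also kills $\abs{E(V_1(xy))}$ (the $\abs{E'_0}$-estimate in that proof actually carries the coefficient $\frac32\abs{E(V_1(xy))}$, so equality at coefficient $1$ forces this term to vanish), whence $V_1(xy)$ is an independent set of size $\frac{n-3}{2}$ in $H'$; in $K^+_{(n-1)/2,(n-3)/2}$ the only independent set of that size is the independent side, since an independent set meeting the matched side has size at most $\frac{n-1}{4}<\frac{n-3}{2}$. Re-attaching $x$ and $y$ gives $H\cong K^+_{(n+3)/2,(n-3)/2}$. In short, what you ``grant'' is precisely what this appeal to the prior extremal characterization delivers for free; without it, your budget analysis would in effect have to reprove that characterization from scratch.
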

  From now on we assume $d(x)\geq\frac{n+1}{2}+\abs{E(V_1(xy))}$ for every vertex $x$ incident with an edge in $M$.
  \begin{restatable}{claim}{vxyindep}\label{C:V1_xy_indep}
    $E(V_1(xy))=\emptyset$ for all $xy\in M$.
  \end{restatable}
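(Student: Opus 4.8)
The plan is to argue by contradiction against the budget inequality~\eqref{eq:n_1_base}, which in particular forces $\beta\leq n/2$. Fix $xy\in M$ and abbreviate $s=t(xy)=\abs{V_1(xy)}$ and $e=\abs{E(V_1(xy))}$. Since $xy\in M$ we have $N(x)=V_1(xy)\cup\{y\}$ and $N(y)=V_1(xy)\cup\{x\}$, so $d(x)=d(y)=s+1$, and the standing assumption $d(x)\geq\frac{n+1}{2}+e$ gives $s\geq\frac{n-1}{2}+e$. Suppose, for contradiction, that $e\geq 1$ and pick an edge $zw\in E(V_1(xy))$. Then $x,y,z,w$ span a $K_4$ (note $z,w\notin\{x,y\}$ as $x,y\notin V_1(xy)$), and I would extract the surplus in $\beta$ from the four distinct edges $xz$, $xw$, $yz$, $yw$, each of which lies in a triangle (for instance $xz$ lies in $xyz$) and hence in $E_1$.

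First I would compute, for the edge $xz$, that $t(xz)=\abs{N(x)\cap N(z)}=1+\abs{N(z)\cap V_1(xy)}$, because $N(x)\cap N(z)=\{y\}\cup\bigl(N(z)\cap V_1(xy)\bigr)$. Writing $k=\abs{N(z)\cap V_1(xy)}\geq 1$ (at least $1$ since $w$ is such a neighbour), the contribution of $xz$ to $\beta$ equals $k\bigl(\tfrac{d(x)+d(z)}{2}-k-2\bigr)$. Using the elementary bound $d(z)\geq k+2$ (the $k$ neighbours of $z$ inside $V_1(xy)$ together with $x$ and $y$) and $d(x)=s+1$, the second factor is at least $\tfrac{s-k-1}{2}$. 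The key point is that $e\geq k$ (the $k$ edges at $z$ inside $V_1(xy)$ all belong to $E(V_1(xy))$), so the standing assumption upgrades to $s\geq\frac{n-1}{2}+k$, whence $\tfrac{s-k-1}{2}\geq\tfrac{n-3}{4}$ and the contribution of $xz$ is at least $k\cdot\frac{n-3}{4}\geq\frac{n-3}{4}$.

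By the symmetric computation for $xw$, $yz$ and $yw$, each of these four edges contributes at least $\frac{n-3}{4}$ to $\beta$, so $\beta\geq n-3$. For $n\geq 9$ this exceeds $\frac{n}{2}$, contradicting~\eqref{eq:n_1_base}; hence $e=0$. I expect the only delicate point to be the coupling of the two elementary inequalities $d(z)\geq k+2$ and $e\geq k$: it is precisely the strengthened hypothesis $d(x)\geq\frac{n+1}{2}+e$, rather than the weak bound $\frac{n-1}{2}+e$ of \Cref{C:large_degree_weak}, that keeps the per-edge estimate above $\frac{n-3}{4}$ even when $z$ has many neighbours inside $V_1(xy)$; this is exactly why the borderline case was peeled off beforehand in \Cref{C:large_degree_strong}. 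The remaining work is bookkeeping, namely verifying that $xz,xw,yz,yw$ are four genuinely distinct edges of $E_1$ and that each $\beta$-summand is evaluated correctly through $t(\cdot)$, rather than any substantially new idea.
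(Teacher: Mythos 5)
Your proof is correct and follows essentially the same route as the paper: both arguments take the $K_4$ on $\{x,y,z,w\}$, lower-bound the $\beta$-contribution of each of the four edges $xz,xw,yz,yw$ by $\frac{n-3}{4}$ using the strengthened degree hypothesis $d(x)\geq\frac{n+1}{2}+\abs{E(V_1(xy))}$, and conclude $\beta\geq n-3>\frac{n}{2}$, contradicting~\eqref{eq:n_1_base}. The only cosmetic difference is that you compute $t(xz)=k+1$ exactly and keep the factor $k$ (via $e\geq k$), whereas the paper bounds $t(xu)\leq\abs{E(V_1(xy))}+1$ directly and drops the multiplier $t(xu)-1$ to $1$; the chains of inequalities are otherwise identical.
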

  \begin{restatable}{claim}{inducedmatching}\label{C:induced_matching}
    $M$ is an induced matching with $\abs{M}\leq\frac{n-1}{4}$.
  \end{restatable}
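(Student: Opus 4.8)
The plan is to establish the three parts in turn---that $M$ is a matching, that $M$ is induced, and that $\abs{M}\leq\frac{n-1}{4}$---using only \Cref{C:V1_xy_indep} together with the standing assumption that $d(x)\geq\frac{n+1}{2}$ for every vertex $x$ incident with an edge of $M$ (this is the running hypothesis $d(x)\geq\frac{n+1}{2}+\abs{E(V_1(xy))}$ combined with $E(V_1(xy))=\emptyset$ from \Cref{C:V1_xy_indep}). The one identity I would record at the outset is that for $xy\in M$ we have $V_1(xy)=N(x)\setminus\{y\}=N(y)\setminus\{x\}$, so $V_1(xy)$ is an independent set of size $d(x)-1\geq\frac{n-1}{2}$, and in particular $d(x)=d(y)$.

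To see that $M$ is a matching, I would assume for contradiction that two edges $xy,xz\in M$ share the vertex $x$, with $y\neq z$. Then $z\in N(x)\setminus\{y\}=V_1(xy)$, and since $\abs{V_1(xy)}\geq 2$ I can pick a second vertex $w\in V_1(xy)\setminus\{z\}$; the relation $N(x)\setminus\{z\}=N(z)\setminus\{x\}$ coming from $xz\in M$ forces $wz\in E$, so $V_1(xy)$ contains the edge $wz$, contradicting \Cref{C:V1_xy_indep}.

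For the induced property, I would take two distinct (hence disjoint) edges $x_iy_i,x_jy_j\in M$ and suppose there is an edge between their endpoint sets, say $x_ix_j\in E$. Chasing the defining equalities of $M$ twice---first $x_j\in V_1(x_iy_i)$, then $x_i\in V_1(x_jy_j)=N(y_j)\setminus\{x_j\}$, which yields $x_iy_j\in E$ and hence $y_j\in V_1(x_iy_i)$---I would conclude that both $x_j$ and $y_j$ lie in $V_1(x_iy_i)$; since $x_jy_j\in E$, this again contradicts \Cref{C:V1_xy_indep}.

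The final counting step is where the degree assumption pays off. Fixing $xy\in M$ and writing $S=V_1(xy)$, the induced-matching property (together with $x\notin N(x)$ and $y\notin S$) shows that no endpoint of any edge of $M$ belongs to $S$, so $S\subseteq V\setminus V(M)$, a set of $n-2\abs{M}$ vertices. Combining with $\abs{S}\geq\frac{n-1}{2}$ gives $\abs{M}\leq\frac{n+1}{4}$, and since $\abs{M}$ is an integer while $n\equiv 1\pmod 4$, this sharpens to $\abs{M}\leq\frac{n-1}{4}$. I expect no serious obstacle here: once \Cref{C:V1_xy_indep} is available, each step is a short neighborhood chase, and the only point requiring care is the integrality rounding that upgrades $\frac{n+1}{4}$ to $\frac{n-1}{4}$, which is exactly where the hypothesis $n\equiv 1\pmod 4$ is used.
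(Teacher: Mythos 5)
Your proof is correct and follows essentially the same route as the paper's: both establish the matching property and inducedness by chasing the defining equality $N(x)\setminus\{y\}=N(y)\setminus\{x\}$ to produce an edge inside $V_1(xy)$, contradicting \Cref{C:V1_xy_indep}, and both count via $V_1(xy)\subseteq V\setminus V(M)$ together with $\abs{V_1(xy)}=d(x)-1\geq\frac{n-1}{2}$. The only difference is that you spell out the integrality rounding from $\frac{n+1}{4}$ to $\frac{n-1}{4}$ using $n\equiv 1\pmod 4$, which the paper leaves implicit.
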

  \begin{restatable}{claim}{largeM}\label{C:large_M}
    If $\abs{M}=\frac{n-1}{4}$ then $H$ is isomorphic to $K^+_{(n-1)/2,(n+1)/2}$ or
    $K^+_{(n+1)/2,(n-1)/2}$.
  \end{restatable}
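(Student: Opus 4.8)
The plan is to combine the maximality $\abs M=\frac{n-1}4$ with the inequality \eqref{eq:n_1_base} and squeeze it into a chain of equalities that pins down $H$. Write $W$ for the set of $\frac{n-1}2$ endpoints of the edges of $M$ and $U=[n]\setminus W$ for the remaining $\frac{n+1}2$ vertices. By \Cref{C:induced_matching} the only edges inside $W$ are those of $M$, so for $xy\in M$ we have $V_1(xy)=N(x)\setminus\{y\}=N(y)\setminus\{x\}\subseteq U$; this set is independent by \Cref{C:V1_xy_indep}, and the standing assumption gives $d(x)=d(y)=\abs{V_1(xy)}+1\ge\frac{n+1}2$. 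Since $\abs U=\frac{n+1}2$, each edge of $M$ is therefore either \emph{full}, with $V_1(xy)=U$ and both endpoints of degree $\frac{n+3}2$, or \emph{deficient}, with $V_1(xy)$ omitting exactly one vertex of $U$ and both endpoints of degree $\frac{n+1}2$. Writing $p$ and $q=\abs M-p$ for the numbers of full and deficient edges, the vertices of $W$ contribute exactly $2p$ to $\gamma$.

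Next I record the unmatched side. For $u\in U$ let $\delta(u)$ be the number of edges of $M$ whose $V_1$ omits $u$, and $r(u)$ the number of neighbours of $u$ in $U$. As $u$ is adjacent to both or neither endpoint of each edge of $M$, it has $\frac{n-1}2-2\delta(u)$ neighbours in $W$, so $d(u)=\frac{n-1}2-2\delta(u)+r(u)$ and its degree deviation is $\frac{n+1}2-d(u)=c(u):=1+2\delta(u)-r(u)$; moreover $\sum_{u\in U}\delta(u)=q$. Using the partner $y$ as a common neighbour of $x$ and any $u\in V_1(xy)$, every edge incident with $W$ lies in a triangle, so $E_0\subseteq E(U)$.

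The key step is to prove that $U$ spans no triangle; this is the main obstacle, and the difficulty is a genuine trade-off. Since each $V_1$ omits at most one vertex of $U$, a triangle $u_1u_2u_3\subseteq U$ is completed by at least $\frac{n-1}2-2\bigl(\delta(u_1)+\delta(u_2)+\delta(u_3)\bigr)$ vertices of $W$, each adjacent to all three and hence counted in $\alpha$, and each of its edges lies in at least that many triangles, inflating $\beta$ through the term $\bigl(t(\cdot)-1\bigr)\bigl(\tfrac{d(\cdot)+d(\cdot)}2-t(\cdot)-1\bigr)$; if instead the $\delta(u_j)$ are large enough to weaken these bounds, then $c(u_j)=1+2\delta(u_j)-r(u_j)$ is forced to be large unless $u_j$ acquires many neighbours in $U$, which again produces many triangle-edges and a large $\beta$. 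Quantifying this so that $\abs{E_0}+\alpha+\beta+\frac12\gamma$ exceeds $\frac n2$, contradicting \eqref{eq:n_1_base}, is the delicate and somewhat tedious part of the argument. Once $U$ is triangle-free, every edge of $U$ lies in $E_0$, so $2\abs{E_0}=\sum_{u\in U}r(u)$, and substituting $r(u)=1+2\delta(u)-c(u)$ yields
\[\gamma+2\abs{E_0}=2p+\sum_{u\in U}\bigl(c(u)^2+r(u)\bigr)=n+\sum_{u\in U}c(u)\bigl(c(u)-1\bigr)\ge n,\]
since each $c(u)$ is an integer. Hence $\frac12\gamma+\abs{E_0}\ge\frac n2$; comparing with \eqref{eq:n_1_base} and using $\alpha,\beta\ge0$ forces $\alpha=\beta=0$ and $c(u)\in\{0,1\}$ for all $u\in U$.

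It remains to read off $H$ from $\alpha=0$, which says that no vertex outside a triangle is adjacent to all three, nor to none, of its vertices. Applied to a triangle $xyu$ with $xy\in M$ and $u\in V_1(xy)$, any other matched vertex not adjacent to $u$ would be a non-neighbour of all of $x,y,u$; hence $u$ lies in every $V_1$, so $U$ splits into $U_{\mathrm{in}}$ (in all $V_1$, with $\delta=0$) and $U_{\mathrm{out}}$ (in no $V_1$, with $\delta=\abs M$), and $\sum_u\delta(u)=q\le\abs M$ gives $\abs{U_{\mathrm{out}}}\le1$. If $U_{\mathrm{out}}=\emptyset$, then every edge of $M$ is full and, applying $\alpha=0$ to the unmatched external vertices of $xyu$, $U$ is independent; thus $H\cong K^+_{(n-1)/2,(n+1)/2}$. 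If $U_{\mathrm{out}}=\{v\}$, then every $V_1$ equals $U\setminus\{v\}$ and $v$ has no neighbour in $W$; now $\alpha=0$ forces $v$ to be adjacent to every vertex of $U\setminus\{v\}$ (otherwise such a vertex is a non-neighbour of all of $x,y,u$) while $U\setminus\{v\}$ remains independent, so $U$ is a star centred at $v$ and $H\cong K^+_{(n+1)/2,(n-1)/2}$.
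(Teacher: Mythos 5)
Your overall strategy is sound, and everything after the triangle-freeness of $U$ checks out: the identity $\gamma+2\abs{E_0}=n+\sum_{u\in U}c(u)\left(c(u)-1\right)$ is correct, it does force $\alpha=\beta=0$ and $c(u)\in\{0,1\}$ when squeezed against \eqref{eq:n_1_base}, and the two extremal graphs are then correctly read off from $\alpha=0$. The genuine gap is exactly where you flag it: you never prove that $U$ spans no triangle. You only sketch a trade-off between $\alpha$, $\beta$ and the quantities $\delta(u_j)$, $r(u_j)$, and explicitly defer the quantification as ``the delicate and somewhat tedious part of the argument.'' Since $E_0=E(U)$, and hence your key identity, rests on this step, the proof as written is incomplete.

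The step is, however, a one-liner, and none of the machinery you envision is needed. Fix any $xy\in M$. By the standing assumption and Claim~\ref{C:V1_xy_indep}, $\abs{V_1(xy)}=d(x)-1\geq\frac{n-1}{2}$; since $V_1(xy)\subseteq U$ (Claim~\ref{C:induced_matching}) and $\abs{U}=\frac{n+1}{2}$, the set $V_1(xy)$ omits at most one vertex of $U$. A triangle inside $U$ would therefore have at least two of its three vertices in $V_1(xy)$, i.e.\ an edge inside $V_1(xy)$, contradicting $E(V_1(xy))=\emptyset$ from Claim~\ref{C:V1_xy_indep}. The same observation gives $E_0=E(U)$ directly: a common neighbour of two adjacent vertices of $U$ can lie neither in $W=V(M)$ (that would again put an edge inside some $V_1(xy)$) nor in $U$ (that would be a triangle in $U$). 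With this fix your proof is complete, and it is genuinely different from the paper's: the paper distinguishes cases according to $\abs{V_1}$ and whether some $V_1(xy)$ equals all of $V\setminus V(M)$, disposing of the residual case by computing $\abs{E}-\abs{T}-\frac12\abs{E_0}=\frac{n^2+3}{8}$ explicitly and contradicting optimality, whereas your route turns \eqref{eq:n_1_base} into a chain of equalities and derives both extremal graphs uniformly from $\alpha=0$ and $c(u)\in\{0,1\}$ --- arguably a cleaner explanation of why exactly these two graphs occur, at the cost of the bookkeeping with $p$, $q$, $\delta(u)$, $r(u)$.
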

  From now on we assume $\abs{M}\leq\frac{n-5}{4}$, and introduce the following notation:
  \begin{align*}
  V_1 &= \bigcup_{xy\in M}V_1(xy) & V_1' &= \bigcap_{xy\in M} V_1(xy) & V_2 &= V\setminus(V_1\cup V(M)).
  \end{align*}
  We also set $d_i(x)=\abs{\left\{y\in V_i\,:\,xy\in E\right\}}$ for $x\in V$ and $i\in\{1,2\}$.
  \begin{restatable}{claim}{largeintersection}\label{C:large_intersection}
    $\abs{V_1'}\geq\abs{V_1}-\frac{n-1}{4}$.
  \end{restatable}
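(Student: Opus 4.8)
The plan is to extract the bound from \eqref{eq:n_1_base}, which gives $\alpha\le\frac n2$, by establishing a lower bound on $\alpha$ that grows with $\abs{V_1}-\abs{V_1'}$. Write $M=\{x_1y_1,\dots,x_my_m\}$ with $m=\abs M$. If $m=1$ then $V_1=V_1(x_1y_1)=V_1'$ and there is nothing to prove, so I would assume $m\ge2$. Throughout I would use two structural facts: by \cref{C:induced_matching} $M$ is an induced matching, and by the definition of $M$ each $e=xy\in M$ satisfies $N(x)\setminus\{y\}=N(y)\setminus\{x\}=V_1(e)$, whence $V_1(e)\cap V(M)=\emptyset$ and, for any $z\in V\setminus V(M)$, $z\in N(x)$ exactly when $z\in V_1(e)$.

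The heart of the argument is a per-triangle contribution estimate. Fix $e_1=x_1y_1\in M$ and $z\in V_1(e_1)$, so $x_1y_1z$ is a triangle. I would show that any other matching edge $e_2=x_2y_2$ with $z\notin V_1(e_2)$ forces both endpoints $x_2,y_2$ to have no neighbour in $\{x_1,y_1,z\}$: the edges $x_2x_1$, $x_2y_1$, $y_2x_1$, $y_2y_1$ are absent because $M$ is induced, and $x_2z,y_2z$ are absent precisely because $z\notin V_1(e_2)=N(x_2)\setminus\{y_2\}=N(y_2)\setminus\{x_2\}$. Hence each such $e_2$ contributes $2$ to $\alpha_{x_1y_1z}$. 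Writing $f(z)=\abs{\{e\in M:z\in V_1(e)\}}$, the number of edges $e_2\ne e_1$ with $z\notin V_1(e_2)$ is $m-f(z)$, giving $\alpha_{x_1y_1z}\ge2(m-f(z))$.

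To finish I would sum these estimates. Because $M$ is a matching, no triangle contains two edges of $M$, so the triangles $x_1y_1z$ (ranging over $e_1\in M$ and $z\in V_1(e_1)$) are pairwise distinct, and since every $\alpha_{xyz}\ge0$ the bound
\[\alpha\ \ge\ \sum_{e_1\in M}\ \sum_{z\in V_1(e_1)}2\bigl(m-f(z)\bigr)\ =\ 2\sum_{z\in V_1}f(z)\bigl(m-f(z)\bigr)\]
follows. The summand vanishes for $z\in V_1'$ (where $f(z)=m$), while for $z\in V_1\setminus V_1'$ one has $1\le f(z)\le m-1$, so $f(z)(m-f(z))\ge m-1$; thus $\alpha\ge2(m-1)(\abs{V_1}-\abs{V_1'})$. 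Combined with $\alpha\le\frac n2$ this gives $\abs{V_1}-\abs{V_1'}\le\frac n{4(m-1)}\le\frac n4$, and since the left side is an integer and $n\equiv1\pmod4$, it is at most $\frac{n-1}4$. I expect the main obstacle to be the contribution estimate: one must identify exactly which edges force non-neighbours of the triangle and then verify that the triangles $x_1y_1z$ are genuinely distinct, so that the per-triangle bounds add without overcounting. The concluding rounding step is then routine.
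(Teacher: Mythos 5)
Your proof is correct, but it takes a genuinely different route from the paper's. You extract the claim from the $\alpha$-term of \eqref{eq:n_1_base}: for each triangle $xyz$ with $xy\in M$ and $z\in V_1(xy)$, every matching edge $x'y'\neq xy$ whose common neighborhood misses $z$ supplies two vertices with no neighbor in $\{x,y,z\}$, whence $\alpha\geq 2\sum_{z\in V_1}f(z)\left(\abs{M}-f(z)\right)\geq 2\left(\abs{M}-1\right)\abs{V_1\setminus V_1'}$, and $\alpha\leq\frac{n}{2}$ gives the bound once $\abs{M}\geq 2$ (the case $\abs{M}=1$ being vacuous). The paper instead exploits the $\gamma$-term: for $z\in V_1(xy)\setminus V_1(x'y')$, \Cref{C:V1_xy_indep} together with the standing degree assumption forces $N(z)\subseteq V\setminus\left(V_1(xy)\cup\{x',y'\}\right)$, hence $d(z)\leq\frac{n-3}{2}$, so each such $z$ contributes at least $4$ to $\gamma\leq n$, and $\abs{V_1\setminus V_1'}\leq\lfloor n/4\rfloor$ follows with no case distinction on $\abs{M}$. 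The trade-off: the paper's argument is shorter, while yours needs neither \Cref{C:V1_xy_indep} nor any degree lower bound (only the induced-matching property and the definition of $M$), and it yields the stronger estimate $\abs{V_1\setminus V_1'}\leq\frac{n}{4(\abs{M}-1)}$ when $\abs{M}$ is large. Your two flagged points of care are both handled soundly: distinct matching edges cannot share a triangle, so the per-triangle bounds add, and the concluding rounding uses $n\equiv 1\pmod 4$ exactly as the paper does. One small attribution slip: $V_1(e)\cap V(M)=\emptyset$ does not follow from the definition of $M$ alone but from $M$ being an induced matching (\Cref{C:induced_matching}); since you invoke that claim anyway, this costs nothing.
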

  \begin{restatable}{claim}{voptions}\label{C:V1_options}
    $\abs{V_1}\in\left\{\frac{n-1}{2},\frac{n+1}{2}\right\}$.
  \end{restatable}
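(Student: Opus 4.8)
The plan is to prove the two inequalities $\frac{n-1}{2}\le\abs{V_1}\le\frac{n+1}{2}$ separately; since $n$ is odd these are consecutive integers, so pinning $\abs{V_1}$ between them settles the claim. The lower bound is immediate. Fix any $xy\in M$. By the standing assumption together with \Cref{C:V1_xy_indep} we have $d(x)\ge\frac{n+1}{2}$, and since $xy\in M$ means $V_1(xy)=N(x)\setminus\{y\}$, this gives $\abs{V_1(xy)}=d(x)-1\ge\frac{n-1}{2}$; as $V_1(xy)\subseteq V_1$ we conclude $\abs{V_1}\ge\frac{n-1}{2}$.

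For the upper bound I would argue by contradiction, assuming $\abs{V_1}\ge\frac{n+3}{2}$ and deriving a violation of the budget inequality \eqref{eq:n_1_base} (which, since $\abs{E_0},\alpha,\beta\ge0$, in particular forces $\gamma\le n$). The decisive structural observation is that every vertex $z\in V_1'$ has \emph{no} neighbor inside $V_1$: a neighbor $w\in V_1$ would lie in some $V_1(x'y')$, and since $z\in V_1'\subseteq V_1(x'y')$ as well, this would contradict the independence of $V_1(x'y')$ from \Cref{C:V1_xy_indep}. Hence $N(z)\subseteq V\setminus V_1$, so $d(z)\le n-\abs{V_1}$, and therefore
\[\tfrac{n+1}{2}-d(z)\ge\tfrac{n+1}{2}-(n-\abs{V_1})=\abs{V_1}-\tfrac{n-1}{2}\ge 2\]
for every $z\in V_1'$, so each such vertex contributes at least $4$ to $\gamma$.

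To finish I would feed in the lower bound on $\abs{V_1'}$ from \Cref{C:large_intersection}: under $\abs{V_1}\ge\frac{n+3}{2}$ we get $\abs{V_1'}\ge\abs{V_1}-\frac{n-1}{4}\ge\frac{n+7}{4}$, which is a positive integer because $n\equiv1\pmod4$. Consequently
\[\gamma=\sum_{x\in V}\left(\tfrac{n+1}{2}-d(x)\right)^2\ge\sum_{z\in V_1'}\left(\tfrac{n+1}{2}-d(z)\right)^2\ge 4\abs{V_1'}\ge n+7,\]
contradicting $\gamma\le n$. Thus $\abs{V_1}\le\frac{n+1}{2}$, completing the argument.

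The main obstacle is identifying the right structural input, since the budget bounds do not by themselves constrain $\abs{V_1}$: a vertex of $V_1$ with degree exactly $\frac{n+1}{2}$ costs nothing in $\alpha$, $\beta$ or $\gamma$, so no purely numerical estimate on $\abs{E_0}+\alpha+\beta+\frac12\gamma$ can bound $\abs{V_1}$ directly. The key is recognizing that the vertices of the \emph{common} intersection $V_1'$ are forced to avoid $V_1$ entirely in their neighborhoods, so that a large $V_1$ simultaneously depresses their degrees (via $d(z)\le n-\abs{V_1}$) and guarantees many such vertices (via \Cref{C:large_intersection}); it is precisely the interplay of these two facts that overspends the $\gamma$-budget. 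The hypothesis $n\equiv1\pmod4$ is what keeps $\frac{n+7}{4}$ integral and the degree deviations at least $2$, and I expect the only real care needed is in checking these parity-sensitive estimates.
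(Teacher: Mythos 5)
Your proof is correct and follows essentially the same route as the paper: the lower bound from $d(x)\geq\frac{n+1}{2}$ for $xy\in M$, and for the upper bound the combination of \Cref{C:large_intersection} (giving $\abs{V_1'}\geq\frac{n+7}{4}$ when $\abs{V_1}\geq\frac{n+3}{2}$) with \Cref{C:V1_xy_indep} (forcing $d(z)\leq n-\abs{V_1}\leq\frac{n-3}{2}$ for $z\in V_1'$), so that $\gamma\geq 4\abs{V_1'}\geq n+7$ contradicts \eqref{eq:n_1_base}. The only difference is that you spell out the intermediate steps (why $V_1'$-vertices have no neighbors in $V_1$, and the parity remarks) that the paper leaves implicit.
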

  \begin{restatable}{claim}{vindependent}\label{C:V1_independent}
    $V_1$ is an independent set in $H$, and $V_1=V_1'$.
  \end{restatable}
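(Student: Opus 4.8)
The plan is to run every argument through the budget inequality \eqref{eq:n_1_base}, namely $\abs{E_0}+\alpha+\beta+\tfrac12\gamma\leq\tfrac{n}{2}$, and to show that any failure of the two assertions forces $\alpha+\tfrac12\gamma$ to exceed $\tfrac{n}{2}$. First I would dispose of the case $\abs{V_1}=\tfrac{n-1}{2}$: each $V_1(xy)\subseteq V_1$ has size $d(x)-1\geq\tfrac{n-1}{2}$ by \Cref{C:V1_xy_indep} and the standing assumption $d(x)\geq\tfrac{n+1}{2}$, so $V_1(xy)=V_1$ for every $xy\in M$, giving $V_1=V_1'$ and independence at once. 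So I may assume $\abs{V_1}=\tfrac{n+1}{2}$. For $z\in V_1$ write $m(z)$ for the number of $xy\in M$ with $z\in V_1(xy)$ and $\delta(z)=\abs{M}-m(z)$ for its \emph{deficiency}, so that $V_1=V_1'$ is equivalent to $\delta\equiv 0$. Since $M$ is an induced matching and $N(x)\setminus\{y\}=N(y)\setminus\{x\}$ for $xy\in M$, each $z\in V_1$ is adjacent to exactly $2m(z)$ vertices of $V(M)$; using that $V(M),V_1,V_2$ partition $V$ and $\abs{V_2}=\tfrac{n-1}{2}-2\abs{M}$, this yields the identity $\tfrac{n+1}{2}-d(z)=1+2\delta(z)-d_1(z)+e_2(z)$, where $d_1(z)$ counts the $V_1$-neighbours and $e_2(z)$ the $V_2$-non-neighbours of $z$. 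The key local computation is that for $xy\in M$ and $z\in V_1(xy)$, the independence of $V_1(xy)$ forces $N(z)\cap V_1(xy)=\emptyset$ and excludes any vertex adjacent to all of $x,y,z$, so that $\alpha_{xyz}=n+1-d(x)-d(z)$ exactly.

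I would prove independence first. Suppose $uv$ is an edge inside $V_1$. As $V_1(xy)$ is independent for each $xy\in M$, no edge of $M$ has both $u,v$ in its common neighbourhood, so $m(u)+m(v)\leq\abs{M}$; together with $\sum_{z\in V_1}\delta(z)\leq\abs{M}$ (a consequence of $d(x)\geq\tfrac{n+1}{2}$) this forces $\delta(u)+\delta(v)=\abs{M}$, all other deficiencies zero, a \emph{unique} such edge, and $\abs{M}\geq 2$ (for $\abs{M}=1$ there is no deficiency at all). Now every $xy\in M$ is deficient, hence $d(x)=\tfrac{n+1}{2}$ and $\alpha_{xyz}=\tfrac{n+1}{2}-d(z)\geq 1$ for each of the at least $\tfrac{n-5}{2}$ non-endpoints $z\in V_1(xy)$; summing over the $\abs{M}\geq 2$ edges gives $\alpha\geq\abs{M}\cdot\tfrac{n-5}{2}\geq n-5$, while the degree identity and convexity give $\tfrac12\gamma\geq\tfrac{n-3}{4}+\abs{M}^2\geq\tfrac{n-3}{4}+4$. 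Then $\alpha+\tfrac12\gamma\geq\tfrac{5n-7}{4}>\tfrac{n}{2}$, contradicting \eqref{eq:n_1_base}; hence $V_1$ is independent.

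With $V_1$ independent, $d_1\equiv 0$, so every $z\in V_1$ satisfies $d(z)=2m(z)+d_2(z)\leq 2\abs{M}+\abs{V_2}=\tfrac{n-1}{2}$ and therefore $\tfrac{n+1}{2}-d(z)\geq 1$, giving $\tfrac12\gamma\geq\tfrac12\abs{V_1}=\tfrac{n+1}{4}$. If some $\delta(z)>0$, then some $xy\in M$ is deficient, so $d(x)=\tfrac{n+1}{2}$ and $\alpha_{xyz}=\tfrac{n+1}{2}-d(z)\geq 1$ for all $z\in V_1(xy)$, whence $\alpha\geq\abs{V_1(xy)}=\tfrac{n-1}{2}$. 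This gives $\alpha+\tfrac12\gamma\geq\tfrac{3n-1}{4}>\tfrac{n}{2}$, a contradiction. Hence $\delta\equiv 0$ and $V_1=V_1'$.

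The delicate point is the internal-edge case: there the two endpoints may have degree as large as $\tfrac{n+1}{2}$, so the clean estimate $\alpha_{xyz}\geq 1$ breaks down precisely at them, and one must first extract the rigid combinatorial structure (a unique such edge, all deficiency concentrated on its endpoints, all $M$-edges deficient, and $\abs{M}\geq 2$) before the budget argument applies. Once that structure is in hand the inequality has ample slack, and the only configuration needing separate mention is $\abs{M}=1$, which is vacuous since it admits no deficiency and hence no internal edge.
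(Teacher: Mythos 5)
Your proof is correct. For the independence assertion it is essentially the paper's argument in more systematic form: both suppose an edge $uv$ inside $V_1$, deduce that every $V_1(xy)$ contains all of $V_1\setminus\{u,v\}$ and exactly one of $u,v$ (hence $E(V_1)=\{uv\}$ and $d(x)=\tfrac{n+1}{2}$ for all $x\in V(M)$), and then contradict the budget \eqref{eq:n_1_base} by bounding $\alpha$ from below via the triangles $xyz$ with $xy\in M$ and $z\in V_1\setminus\{u,v\}$ (the excluded vertex among $u,v$ has no neighbour in $\{x,y,z\}$) and bounding $\gamma$ via degree deficits; your deficiency function and the exact identity $\alpha_{xyz}=n+1-d(x)-d(z)$ automate what the paper reads off directly, and your constants ($\alpha\geq n-5$, $\tfrac12\gamma\geq\tfrac{n-3}{4}+4$, versus the paper's $\alpha\geq\tfrac{n-3}{2}$, $\gamma\geq\tfrac{n+5}{2}$) both leave ample slack. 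The genuine difference is in the second assertion, $V_1=V_1'$: the paper disposes of it in one sentence by appealing to maximality of $\abs{E}-\abs{T}-\tfrac12\abs{E_0}$ (an exchange argument: missing edges $xz$, $yz$ could be added to improve the objective), whereas you run the budget a second time, showing that a deficient $M$-edge would give $\alpha\geq\tfrac{n-1}{2}$, while independence of $V_1$ gives $d(z)\leq\tfrac{n-1}{2}$ for every $z\in V_1$ and hence $\tfrac12\gamma\geq\tfrac{n+1}{4}$, so $\alpha+\tfrac12\gamma>\tfrac{n}{2}$. Your version is self-contained and avoids justifying an exchange the paper leaves implicit, at the cost of repeating the machinery. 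One step you should spell out: your deductions ``a unique such edge'' and $d(z)\leq\tfrac{n-1}{2}$ for non-endpoints $z\in V_1(xy)$ both require that neither $\delta(u)$ nor $\delta(v)$ equals $\abs{M}$; this is true because $u,v\in V_1=\bigcup_{xy\in M}V_1(xy)$ forces $m(u),m(v)\geq 1$, but as written it is only implicit, and without it a non-endpoint could have a $V_1$-neighbour and the estimate $\alpha_{xyz}\geq 1$ would fail.
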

  \begin{restatable}{claim}{vsmall}\label{C:V1small}
    $\abs{V_1}=\frac{n-1}{2}$.
  \end{restatable}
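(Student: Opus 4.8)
The plan is to rule out the possibility $\abs{V_1}=\frac{n+1}{2}$ left open by \Cref{C:V1_options}, which then forces $\abs{V_1}=\frac{n-1}{2}$. So suppose for contradiction that $s:=\abs{V_1}=\frac{n+1}{2}$. Since $V_1=V_1'$ by \Cref{C:V1_independent}, I first record the rigid local picture it induces: $V_1(xy)=V_1$ for every $xy\in M$, so each $x\in V(M)$ has $N(x)=V_1\cup\{y\}$ (with $y$ its $M$-partner), giving $d(x)=s+1$; every $v\in V_1$ is adjacent to all of $V(M)$ and $V_1$ is independent; and there are no edges between $V(M)$ and $V_2$. Writing $m=\abs M$ and $q=\abs{V_2}=s-1-2m$, the standing assumption $m\leq\frac{n-5}{4}$ gives $q\geq 2$ and $s\geq q+3$.

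Next I would turn \eqref{eq:n_1_base} into a usable budget. Each $x\in V(M)$ deviates from $\frac{n+1}{2}=s$ by exactly $1$, and each $v\in V_1$ has $d(v)=2m+d_2(v)\leq 2m+q=s-1$, so it deviates by at least $1$. Setting $f_v=q-d_2(v)$ for $v\in V_1$ and $g_w=s-d_1(w)$ for $w\in V_2$, and using that each triangle $xyv$ with $xy\in M$, $v\in V_1$ contributes $\alpha_{xyv}=q-d_2(v)=f_v$, substituting the degree deviations into $\gamma$ reduces \eqref{eq:n_1_base} to the budget inequality $\abs{E_0}+mF+\beta+F+\tfrac12\sum_{v\in V_1}f_v^2+\tfrac12\sum_{w\in V_2}(g_w-d_2(w))^2\leq\tfrac q2$, where $F:=sq-\abs{E(V_1,V_2)}=\sum_v f_v=\sum_w g_w$ counts the missing $V_1$--$V_2$ edges and $d_2(w)=\abs{N(w)\cap V_2}$.

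From here the goal is to show this budget forces a fully rigid graph. First, no $w\in V_2$ can satisfy $d_2(w)=0$: otherwise all $s-g_w$ edges from $w$ to $V_1$ lie in $E_0$, so $s-g_w\leq\frac q2$, while the budget term $\tfrac12 g_w^2\leq\tfrac q2$ gives $g_w\leq\sqrt q$; together these yield $s\leq\frac q2+\sqrt q$, contradicting $s\geq q+3$. Hence $\sum_w d_2(w)\geq q$. Then I would rule out $F\geq 1$: dropping the nonnegative terms $\abs{E_0},\beta,\tfrac12\sum_v f_v^2$ and applying Cauchy--Schwarz gives $\tfrac12\sum_w(g_w-d_2(w))^2\geq\frac{(q-F)^2}{2q}$ (the case $F>q$ being immediate since then $2F>\frac q2$), so the budget forces $2F+\frac{(q-F)^2}{2q}\leq\frac q2$, i.e. $F+\frac{F^2}{2q}\leq 0$, which is impossible. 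Thus $F=0$, the bipartite graph between $V_1$ and $V_2$ is complete, and the budget collapses to $\tfrac12\sum_w d_2(w)^2\leq\tfrac q2$ with every $d_2(w)\geq 1$; this forces $d_2(w)=1$ for all $w$, so $V_2$ induces a perfect matching.

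To finish, take any edge $w_1w_2$ of this perfect matching on $V_2$ (which exists as $q\geq 2$). Since $V_1$--$V_2$ is complete and $w_1,w_2$ have no other $V_2$-neighbour and no $V(M)$-neighbour, we get $N(w_1)=V_1\cup\{w_2\}$ and $N(w_2)=V_1\cup\{w_1\}$, hence $N(w_1)\setminus\{w_2\}=V_1=N(w_2)\setminus\{w_1\}$ and so $w_1w_2\in M$. But then $w_1\in V(M)$, contradicting $w_1\in V_2=V\setminus(V_1\cup V(M))$. (Equivalently, the graph is $K^+_{(n-1)/2,(n+1)/2}$, whose defining matching has size $\frac{n-1}{4}>\frac{n-5}{4}\geq m$.) This rules out $\abs{V_1}=\frac{n+1}{2}$, leaving $\abs{V_1}=\frac{n-1}{2}$. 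I expect the main obstacle to be exactly the bookkeeping of the third paragraph: extracting the rigid extremal configuration from the budget requires controlling $\abs{E_0}$, the triangle contribution to $\alpha$, and the degree-deviation sum $\gamma$ simultaneously, and in particular making the Cauchy--Schwarz estimates sharp enough to eliminate both the near-complete bipartite configurations ($F\geq 1$) and the $V_2$-isolated vertices against the tight budget $\tfrac q2$.
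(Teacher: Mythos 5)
Your proof is correct, and while it shares the paper's overall skeleton (assume $\abs{V_1}=\frac{n+1}{2}$, exploit the rigid structure forced by \Cref{C:V1_independent} --- namely $N(x)=V_1\cup\{y\}$ for $xy\in M$, hence $d(x)=\frac{n+3}{2}$, $V_1$ independent and completely joined to $V(M)$, no $V(M)$--$V_2$ edges --- and contradict the budget \eqref{eq:n_1_base}), its execution is genuinely different. The paper uses only the $\gamma$-term of \eqref{eq:n_1_base}: it lower-bounds the three contributions to $\gamma$ (the $V_1$-part by $\frac{n+1}{2}+3\delta$, the $V_2$-part via \Cref{C:V2}, which the appendix invokes as a forward reference before proving it) and reaches the numerical contradiction $\gamma>n$, splitting into the cases $\delta=0$ and $\delta\geq 1$. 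You instead compute $\gamma$ exactly in terms of the deficiencies $f_v$, $g_w$, and you also keep the $\abs{E_0}$-term and the $\alpha$-term (via the exact count $\alpha_{xyv}=f_v$ for triangles $xyv$ with $xy\in M$, $v\in V_1$, which is where the $mF$ in your budget comes from --- I checked this count and the reduction to $\abs{E_0}+mF+\beta+F+\frac12\sum_v f_v^2+\frac12\sum_w(g_w-d_2(w))^2\leq\frac{q}{2}$, and both are right). The $\abs{E_0}$-term lets you prove directly that $H[V_2]$ has no isolated vertex (a fact the paper gets, in weaker form, from \Cref{C:V2}), and Cauchy--Schwarz then forces the extremal structure $F=0$ with $H[V_2]$ a perfect matching; your contradiction is structural rather than numerical: the $V_2$-matching edges satisfy the defining property of $M$, contradicting $V_2\cap V(M)=\emptyset$ (equivalently, $H\cong K^+_{(n-1)/2,(n+1)/2}$ would force $\abs{M}=\frac{n-1}{4}>\frac{n-5}{4}$). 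What your route buys: it is self-contained, avoiding the forward dependence on \Cref{C:V2} and its optimality/exchange argument, and it identifies exactly which configuration sits at the boundary of the budget. What the paper's route buys: granting \Cref{C:V2}, it is considerably shorter, since crude lower bounds on $\gamma$ already overshoot $n$ and no extremal analysis is needed.
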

  \begin{restatable}{claim}{lastclaim}\label{C:V2}
    $H[V_2]$ contains at most one isolated vertex and no isolated edge. In particular, $\abs{E(V_2)}\geq\frac23\left(\abs{V_2}-1\right)$.
  \end{restatable}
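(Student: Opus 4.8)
First I would turn the previous claims into concrete adjacency information. By \Cref{C:V1_independent} the set $V_1$ is independent and $V_1=V_1'=\bigcap_{xy\in M}V_1(xy)$, so $V_1(xy)=V_1$ for \emph{every} $xy\in M$. Hence $V_1\subseteq N(x)$ for all $x\in V(M)$, and since $xy\in M$ gives $N(x)\setminus\{y\}=N(x)\cap N(y)=V_1(xy)=V_1$, we get $N(x)=V_1\cup\{y\}$ with $y$ the matching partner of $x$. Thus every vertex of $V(M)$ has degree $\tfrac{n+1}{2}$ and there are \emph{no} edges between $V(M)$ and $V_2$. Consequently every edge meeting $V_2$ either joins $V_2$ to $V_1$ or lies inside $V_2$; using that $V_1$ is independent, a $V_1$–$V_2$ edge $wu$ lies in a triangle iff $u$ and $w$ have a common neighbour in $V_2$, and the common neighbours of an edge $uu'\subseteq V_2$ lie in $V_1\cup V_2$.

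\textbf{Step 2: At most one isolated vertex.} Let $u\in V_2$ be isolated in $H[V_2]$, so $d_2(u)=0$, $N(u)\subseteq V_1$, and $d(u)=d_1(u)\leq\abs{V_1}=\tfrac{n-1}{2}$. By Step 1 each edge $uw$ has no common neighbour in $V_2$, so it lies in $E_0$; thus $u$ contributes $d(u)$ edges to $E_0$ and the term $\bigl(\tfrac{n+1}{2}-d(u)\bigr)^2$ to $\gamma$. Setting $\phi(d)=d+\tfrac12\bigl(\tfrac{n+1}{2}-d\bigr)^2$, a one–variable estimate gives $\phi(d)\geq\phi\bigl(\tfrac{n-1}{2}\bigr)=\tfrac n2$ for every integer $d\leq\tfrac{n-1}{2}$. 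Two isolated vertices $u_1,u_2$ would have disjoint families of $E_0$–edges (distinct $V_2$–endpoints) and distinct $\gamma$–terms, so, using $\alpha,\beta\geq0$,
\[
  \abs{E_0}+\tfrac12\gamma\ \geq\ \phi(d(u_1))+\phi(d(u_2))\ \geq\ n\ >\ \tfrac n2,
\]
contradicting \eqref{eq:n_1_base}. The same computation shows a single isolated vertex already forces $\abs{E_0}+\tfrac12\gamma\geq\tfrac n2$, so no isolated edge (which, as Step 3 shows, always costs a further positive amount) can coexist with an isolated vertex. Hence $H[V_2]$ has at most one isolated vertex, and it suffices to rule out isolated edges \emph{when there is no isolated vertex}.

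\textbf{Step 3: No isolated edge (the hard step).} Suppose $uu'$ is a $K_2$-component of $H[V_2]$, and put $A=N(u)\cap V_1$, $B=N(u')\cap V_1$. Since $u,u'\notin V(M)$ we have $uu'\notin M$, so $A=N(u)\setminus\{u'\}$ and $B=N(u')\setminus\{u\}$ differ; choosing $w_0\in A\setminus B$ (say), the edge $uw_0$ has $u'$ as its only potential $V_2$–witness and $u'\not\sim w_0$, so $uw_0\in E_0$ and the configuration always costs something. The plan is to add up three contributions against \eqref{eq:n_1_base}: the $\abs{A\setminus B}+\abs{B\setminus A}$ edges (plus $uu'$ itself if $A\cap B=\varnothing$) that lie in $E_0$; the bound $\alpha\geq\abs{M}\bigl(\abs{V_1}\abs{V_2}-\abs{E(V_1,V_2)}\bigr)$ coming from the $M$–triangles; and the $\gamma$–terms of the smaller endpoint together with those $V_1$–vertices missing $u$ or $u'$. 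The main obstacle is the borderline case where $A$ and $B$ are both almost all of $V_1$, since there all three quantities are only of constant order. To close it I would bring in the remaining equality conditions from the proof of \Cref{lem:main_inequality} that must hold in the extremal case: namely that $\abs{E_0}$ is even and that $d(x)+d(y)=n$ for every $xy\in E_0$ (the inequality $d(x)+d(y)\leq n$ used in passing to \eqref{eq:second_step}). Applied to $uw_0$ this pins down $d(w_0)$ and forces additional $V_1$–$V_2$ non-adjacencies, and tracking the ensuing cascade of $E_0$–edges and degree defects to a contradiction is exactly the tedious bookkeeping flagged in the statement.

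\textbf{Step 4: The edge count.} Finally, given at most one isolated vertex and no $K_2$-component, every component of $H[V_2]$ is either a single vertex (at most one of these) or is connected on $m\geq3$ vertices, hence has at least $m-1\geq\tfrac23 m$ edges. Summing over all components other than the possible isolated vertex, whose vertices number at least $\abs{V_2}-1$, yields $\abs{E(V_2)}\geq\tfrac23\bigl(\abs{V_2}-1\bigr)$.
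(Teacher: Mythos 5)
Your Steps 1, 2 and 4 are correct, and Step 2 is a legitimate (if heavier) alternative to the paper's treatment of isolated vertices: the function $\phi(d)=d+\tfrac12\bigl(\tfrac{n+1}{2}-d\bigr)^2$ is indeed minimized at $d=\tfrac{n-1}{2}$ over integers $d\leq\tfrac{n-1}{2}$, so two isolated vertices of $H[V_2]$ force $\abs{E_0}+\tfrac12\gamma\geq n$, contradicting \eqref{eq:n_1_base}. But Step 3 --- ruling out an isolated edge, which is the heart of the claim --- is not a proof: it is a plan whose hard case you explicitly leave open. You concede that when $A$ and $B$ are almost all of $V_1$, the three contributions you track are only of constant order against the budget $\tfrac n2$, and the device you propose to close this case is unsound. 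Extremality does \emph{not} imply $d(x)+d(y)=n$ for every $xy\in E_0$: in the proof of \Cref{lem:main_inequality} the slack from $d(x)+d(y)\leq n$ is dropped only in aggregate, so extremality yields $\abs{E_0}+\alpha+\beta+\tfrac12\gamma+\tfrac12\sum_{xy\in E_0}\bigl(n-d(x)-d(y)\bigr)\leq\tfrac n2$ and nothing forces any individual summand to vanish; an $O(1)$ total slack is perfectly compatible with this bound, so no "cascade" gets started. Since the whole difficulty of the claim sits exactly in this configuration, the proposal has a genuine gap.

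The missing idea is that this claim is not a counting statement but an exchange statement. Since $D$ is a minimum dominating set, $H$ attains the maximum $\left\lfloor\frac{(n+1)^2}{8}\right\rfloor$ of $\abs{E}-\abs{T}-\tfrac12\abs{E_0}$ by \Cref{lem:translation,lem:main_inequality}, so no modification of $H$ may increase this quantity. Now let $uu'$ be an isolated edge of $H[V_2]$ and suppose some $w\in V_1$ is not adjacent to $u$. Add the edge $uw$. If $u'w\in E$, this creates exactly one new triangle $uu'w$ (your Step 1 shows $N(u)\cap N(w)=\{u'\}$) and moves $u'w$, which lay in $E_0$, out of $E_0$; the objective changes by at least $1-1+\tfrac12>0$. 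If $u'w\notin E$, no triangle is created and the new edge lands in $E_0$; the objective changes by $1-\tfrac12>0$. Either way maximality is violated, so $V_1\subseteq N(u)$ and likewise $V_1\subseteq N(u')$, whence $N(u)\setminus\{u'\}=V_1=N(u')\setminus\{u\}$, i.e.\ $uu'\in M$ by the definition of $M$ --- impossible because $u,u'\in V_2$ and $V_2\cap V(M)=\emptyset$. This is the paper's two-line argument, and the same exchange (add the edge between two isolated vertices of $H[V_2]$ together with all missing edges from them to $V_1$) also disposes of your Step 2 without any computation.
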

  Let $\delta=\abs{V_1}\abs{V_2}-\abs{E(V_1,V_2)}$ be the number of non-edges between $V_1$ and $V_2$.
  \begin{restatable}{claim}{vfewedges}\label{C:V2fewedges}
    $\delta\leq\frac{n-1}{2}-2$.
  \end{restatable}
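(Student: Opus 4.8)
The plan is to exploit the rigid structure forced by the earlier claims and to show that each non-edge between $V_1$ and $V_2$ is charged, through the quantities $\alpha$ and $\gamma$ in~\eqref{eq:n_1_base}, enough to control $\delta$. First I would pin down the neighborhoods of the matching vertices. By \Cref{C:V1_independent} we have $V_1=V_1'=\bigcap_{xy\in M}V_1(xy)$, while $V_1=\bigcup_{xy\in M}V_1(xy)$ by definition, so $V_1(xy)=V_1$ for every $xy\in M$. Since each $xy\in M$ is a twin edge, $N(x)\setminus\{y\}=N(y)\setminus\{x\}$, this common set equals $N(x)\cap N(y)=V_1$, giving $N(x)=V_1\cup\{y\}$ and $N(y)=V_1\cup\{x\}$ exactly. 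Thus every matching vertex has degree exactly $\frac{n+1}{2}$ and no neighbor in $V_2$, and consequently no vertex of $V_2$ has a neighbor in $V(M)$. Writing $d_2(v)=\abs{N(v)\cap V_2}$ for $v\in V_1$ and using $\abs{V_2}=\frac{n+1}{2}-2\abs{M}$ (from \Cref{C:V1small}), this yields $\frac{n+1}{2}-d(v)=\abs{V_2}-d_2(v)\geq 0$, whence $\delta=\sum_{v\in V_1}\left(\abs{V_2}-d_2(v)\right)=\sum_{v\in V_1}\left(\frac{n+1}{2}-d(v)\right)$.

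The key step is the lower bound $\alpha\geq\abs{M}\delta$. Fix $xy\in M$ and $v\in V_1$, so that $xyv\in T$. I would check that for this triangle no vertex has three neighbors in $\{x,y,v\}$ (such a vertex would lie in $N(x)\cap N(y)=V_1$ and be adjacent to $v$, impossible since $V_1$ is independent), and that a vertex has zero neighbors in $\{x,y,v\}$ exactly when it lies in $V_2$ and is non-adjacent to $v$ (each vertex of $V_1\setminus\{v\}$ has the two neighbors $x,y$, each other matching vertex has the single neighbor $v$, and $V_2$ is non-adjacent to $x,y$). Hence $\alpha_{xyv}=\abs{V_2}-d_2(v)$, and summing over $v\in V_1$ and then over the $\abs{M}$ matching edges — all these triangles being distinct — gives $\alpha\geq\sum_{xy\in M}\sum_{v\in V_1}\alpha_{xyv}=\abs{M}\,\delta$. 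Combining this with the trivial estimate $\frac12\gamma\geq\frac12\sum_{v\in V_1}\left(\frac{n+1}{2}-d(v)\right)^2\geq\frac12\delta$ (using $t^2\geq t$ for nonnegative integers) and the nonnegativity of $\abs{E_0}$ and $\beta$, inequality~\eqref{eq:n_1_base} yields $\abs{M}\,\delta+\frac12\delta\leq\frac n2$.

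Finally I would convert this into the claimed bound by a short case analysis. If $\delta\leq 2$ the claim holds because $n\geq 9$ gives $\frac{n-5}{2}\geq 2$. If $\delta\geq 3$ and $\abs{M}\geq 2$, then $n\geq 2\abs{M}\delta\geq 4\delta\geq 2\delta+5$, i.e.\ $\delta\leq\frac{n-5}{2}$. If $\abs{M}=1$, then $\frac32\delta\leq\frac n2$ gives $\delta\leq\frac n3\leq\frac{n-5}{2}$ for $n\geq 15$, and $\delta\leq\lfloor\frac{13}{3}\rfloor=4=\frac{n-5}{2}$ for $n=13$.

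The only case left is $(n,\abs{M})=(9,1)$ with $\delta=3$, where $\abs{V_1}=4$ and $\abs{V_2}=3$, and I expect this to be the main obstacle, since here the general estimate is exactly tight: the crude bounds give $\alpha\geq 3$ and, in the only configuration of deficiencies with $\delta=3$ avoiding a larger $\gamma$, namely three vertices of $V_1$ each missing one vertex of $V_2$, one gets $\frac12\gamma\geq\frac32$, so $\alpha+\frac12\gamma\geq\frac92=\frac n2$ with no room to spare. To rule out $\delta=3$ I would draw on the extra slack provided by \Cref{C:V2}, which forces at least two edges inside the three-vertex set $V_2$, producing additional triangles that contribute to $\alpha$ and additional degree deficiencies that contribute to $\gamma$; I would close this lone case by a short finite inspection of the possible configurations on the nine vertices.
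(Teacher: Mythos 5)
Your argument reproduces the core of the paper's own proof: the same two charges, $\alpha\geq\abs{M}\delta$ (obtained from the triangles $xyv$ with $xy\in M$, $v\in V_1$) and $\gamma\geq\sum_{v\in V_1}\bigl(\frac{n+1}{2}-d(v)\bigr)^2\geq\delta$, fed into \eqref{eq:n_1_base}. Your exact determination of the matching vertices' neighborhoods and the identity $\alpha_{xyv}=\abs{V_2}-d_2(v)$ is a slightly sharpened version of the inequality the paper uses, and your case split on $\abs{M}$ is equivalent to the paper's direct conclusion $\delta\leq\lfloor n/3\rfloor\leq\frac{n-5}{2}$ for $n\geq 13$. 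Up to that point the proposal is correct and essentially identical to the paper.

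The genuine gap is the final case $(n,\abs{M},\delta)=(9,1,3)$, which you do not prove but defer to ``a short finite inspection.'' Since this is exactly the tight case, it cannot be waved through; however, no enumeration is needed, because the same charging scheme closes it. By \Cref{C:V1small}, \Cref{C:V2} and $\abs{M}=1$, the set $V_2$ has three vertices and induces either a triangle or a path with two edges. If $V_2$ induces a triangle $uvw$, then (as you established) the two matching vertices have no neighbors in $V_2$, so both contribute to $\alpha_{uvw}$; hence $\alpha\geq\delta+2$, and \eqref{eq:n_1_base} forces $\delta\leq 2$. If $V_2$ induces a path, note that the extra help cannot come from $\alpha$ --- contrary to your sketch, the triangles formed by a path edge with an apex in $V_1$ contribute nothing to $\alpha$, because the matching vertices are adjacent to that apex --- but it does come from $\gamma$: writing $\delta_z$ for the number of non-neighbors of $z\in V_2$ in $V_1$, the path structure gives $\frac{n+1}{2}-d(z)\in\{\delta_z,\delta_z-1\}$ for $z\in V_2$, hence $\sum_{z\in V_2}\bigl(\frac{n+1}{2}-d(z)\bigr)^2\geq\delta-1$, so $\gamma\geq 2\delta-1$ and $\alpha+\frac12\gamma\geq 2\delta-\frac12$, which with \eqref{eq:n_1_base} again forces $\delta\leq 2$. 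With this two-line case analysis in place of the promised inspection, your proof is complete and coincides with the paper's.
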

  Finally, we can derive the required contradiction. By \Cref{C:V2fewedges}, there are $x,y\in V_1$ with $xz,yz\in E$ for all $z\in V_2$, hence $t(uv)\geq 2$ for all $uv\in E(V_2)$. Pick $x\in V_1$ with $d(x)=\frac{n+1}{2}$ and let $H'=H-x$ be the graph obtained from $H$ by deleting vertex $x$. With $E'$, $T'$ and $E'_0$ being the sets of edges, triangles, and edges not contained in a triangle in $H'$, we have
  \begin{multline*}
    \abs{E'}-\abs{T'}-\frac12\abs{E'_0}=\left(\abs{E}-\frac{n+1}{2}\right)-\left(\abs{T}-\abs{M}-\abs{E(V_2)}\right)-\frac12\abs{E_0}\\
    =\abs{E}-\abs{T}-\frac12\abs{E_0}-\frac{n+1}{2}+\abs{M}+\abs{E(V_2)}=\frac{n^2-2n-7}{8}+\abs{M}+\abs{E(V_2)}.
  \end{multline*}
  Using Claim~\ref{C:V2} and our assumption $\abs{M}\leq\frac{n-5}{4}$, we obtain
  \[\abs{M}+\abs{E(V_2)}\geq\abs{M}+\frac23\left(\frac{n+1}{2}-2\abs{M}-1\right)=\frac{n-1}{3}-\frac13\abs{M}\geq\frac{n-1}{3}-\frac{n-5}{12}=\frac{3n+1}{12}.\]
  But $\abs{M}+\abs{E(V_2)}$ is an integer, so we can round the right-hand side (taking into account that $n\equiv 1\pmod 4$): $\abs{M}+\abs{E(V_2)}\geq\frac{n+3}{4}$. Then
  \[\abs{E'}-\abs{T'}-\frac12\abs{E'_0}\geq\frac{n^2-2n-7}{8}+\frac{n+3}{4}=\frac{n^2-1}{8},\]
  and this implies $H'\cong K^+_{(n-1)/2,(n-1)/2}$, which is impossible because $H'$ contains a vertex of degree $\frac{n+1}{2}$ whose neighborhood is $V(M)\cup V_2$ while in $K^+_{(n-1)/2,(n-1)/2}$ the neighborhood of any vertex of degree $\frac{n+1}{2}$ induces a star $K_{1,(n-1)/2}$.
\end{proof}

\section{An upper bound for \texorpdfstring{$i(G_{k+1,k})$}{}}\label{sec:k_greater_than_2}
In this section we prove Theorem~\ref{thm:independent_dominating_asymp}. For $k=2$, the independent dominating sets in $G_{k+1,k}$ correspond to graphs with the property that every edge is contained in a triangle. We generalize this to $k$-uniform hypergraphs ($k$-graphs for short) in the following way.
\begin{definition}
    A $k$-graph is \emph{well-covered} if every edge is contained in a $(k+1)$-clique. In other words, for every edge $e=\{v_1,\dots,v_k\}$, there exists a vertex $v_{k+1}\in V(H)\setminus e$ such that $e_i=e\setminus\{v_i\}\cup\{v_{k+1}\}$ is an edge for every $i\in[k]$.
\end{definition}
As in the case $k=2$, there is a correspondence between well-covered $k$-graphs $H$ and independent dominating sets $D$ in $G_{k+1,k}$. The dominating set $D(H)$ has the $(k+1)$-cliques of $H$ as the vertices on level $k+1$, and the $k$-sets which are not edges of $H$ as the vertices on level $k$. Conversely, given an independent dominating set $D$ in $G_{k+1,k}$ we get a well-covered $k$-graph $H(D)$ by taking the $k$-sets which are not in $D$ as the edges. For a $k$-graph $H$, let $e(H)$ and $c(H)$ be the numbers of edges and of $(k+1)$-cliques in $H$, respectively. The above observation implies that finding $i(G_{k+1,k})$ is equivalent to maximizing $e(H)-c(H)$ over all well-covered $k$-graphs $H$. More precisely, $i(G_{k+1,k})=\binom{n}{k}-e(H)+c(H)$ for an optimal $H$.

Our approach is to generalize the graphs $K^+_{s,n-s}$ (for even $s$) from Theorem~\ref{thm:extremal_constructions}. One way of looking at this construction is as follows: In order to construct a graph which has many edges and few triangles subject to the condition that every edge is contained in a triangle, we partition the vertex set into two parts $A$ and $B$, take all edges with exactly one vertex in $B$ (and one in $A$), and add a set $M$ of edges in $A$ with the property that every $x\in A$ is contained in exactly one edge in $M$ (that is, a perfect matching on $A$). Translating this directly to $k$-graphs, we would like to take all $k$-sets with exactly one vertex in $B$ (and $k-1$ vertices in $A$), and then add a collection $M$ of $k$-subsets of $A$ such that every $(k-1)$-subset of $A$ is contained in exactly one member of $M$. The complete $k$-graphs on $k+1$ vertices are then precisely the sets $X\cup\{b\}$ with $X\in M$ and $b\in B$.    
\begin{example}\label{ex:first_step}
    Let $n=11$ and $k=3$. For $A=\{1,2,\dots,7\}$ and $B=\{8,9,10,11\}$, we can take $M$ to be a Steiner triple system on $A$ to obtain a 3-graph with $\abs{M}+\binom{\abs{A}}{2}\abs{B}=91$ edges and $\abs{M}\abs{B}=28$ complete $3$-graphs on $4$ vertices. The corresponding dominating set in $G_{4,3}$ has size $102$ ($28$ 4-sets and $\binom{11}{3}-91=74$ 3-sets).  
\end{example}
As the design condition on $M$ (every $(k-1)$-set is covered exactly once) is too much to hope for we modify the requirement as follows: We take $M$ to be a collection of $k$-subsets of $A$ such that every $(k-1)$-subset of $A$ at most once. As a consequence, not all $k$-sets with exactly one vertex in $B$ can be edges, but only those of the form $X\cup\{b\}$ with $b\in B$ and $X$ a $(k-1)$-subset of $A$ that is covered by a member of $M$. In the following lemma we determine a lower bound for the value $e(H)-c(H)$ that can be obtained using this construction. 
\begin{lemma}\label{lem:base_constuction}
    Fix $\alpha$ with $0<\alpha<1$, and set $A=\left\{1,2,\dots,\lfloor(1-\alpha)n\rfloor\right\}$, $B=[n]\setminus A$. There exists a well-covered $k$-graph $H$ such that
    \begin{enumerate}[(i)]
        \item $e(H)-c(H)\geq\left[(k-1)\alpha(1-\alpha)^{k-1}+o(1)\right]\binom{n}{k}$, and
        \item $\abs{X\cap B}\leq 1$ for every $X\in E(H)$.
    \end{enumerate}
\end{lemma}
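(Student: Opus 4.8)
The plan is to realize the construction sketched just before the lemma and to supply its one non-elementary ingredient, a near-optimal packing, from the literature. Concretely, I would let $M\subseteq\binom{A}{k}$ be a \emph{partial Steiner system} $S(k-1,k,\abs A)$, that is, a family of $k$-subsets of $A$ in which every $(k-1)$-subset of $A$ is contained in at most one member, chosen of maximum size. I then set
\[
  E(H)=M\cup\bigl\{\,X\cup\{b\}\,:\,b\in B,\ X\in\Delta M\,\bigr\},
\]
where $\Delta M$ is the set of $(k-1)$-subsets covered by some member of $M$. Property~(ii) is then immediate: a member of $M$ lies entirely in $A$, and every other edge meets $B$ in exactly the single vertex $b$.

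Next I would verify that $H$ is well-covered and compute $e(H)$ and $c(H)$. For well-coveredness, observe that $Y\cup\{b\}$ is a $(k+1)$-clique for every $Y\in M$ and $b\in B$: its $k$-subsets are $Y\in M$ itself together with the sets $(Y\setminus\{v\})\cup\{b\}$, whose $A$-parts lie in $\Delta M$, so all are edges. Hence each $Y\in M$ lies in such a clique (using $B\neq\emptyset$, which holds for large $n$ since $\alpha>0$), and each edge $X\cup\{b\}$ lies in $Y\cup\{b\}$ for any $Y\in M$ with $X\subseteq Y$. The packing condition makes the $(k-1)$-shadows of distinct members of $M$ disjoint, and each member has exactly $k$ of them, so $\abs{\Delta M}=k\abs M$ and
\[
  e(H)=\abs M+k\abs M\,\abs B.
\]
For the cliques I use property~(ii) again: every $(k+1)$-clique meets $B$ in at most one vertex, since otherwise dropping an $A$-vertex (or one of the extra $B$-vertices) would leave a facet meeting $B$ twice, which is not an edge; and a clique lying entirely in $A$ is impossible, because two of its $k$-subsets would then be members of $M$ sharing a common $(k-1)$-set. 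Thus every clique has the form $Y\cup\{b\}$ with $Y\in M$ and $b\in B$, whence $c(H)\leq\abs M\,\abs B$ and
\[
  e(H)-c(H)\geq\abs M+(k-1)\abs M\,\abs B\geq(k-1)\abs M\,\abs B.
\]

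It remains to make $\abs M$ as large as possible and to substitute. The existence of a packing with $\abs M=(1-o(1))\frac1k\binom{\abs A}{k-1}$ is exactly Rödl's theorem on the Erdős--Hanani conjecture (near-perfect packings of $(k-1)$-sets by $k$-sets), which applies because $\abs A=\lfloor(1-\alpha)n\rfloor\to\infty$ for fixed $k$. Substituting this together with $\abs B=(\alpha+o(1))n$ and the estimates $\binom{\abs A}{k-1}=(1+o(1))\frac{((1-\alpha)n)^{k-1}}{(k-1)!}$, $\binom nk=(1+o(1))\frac{n^k}{k!}$, and $\frac{n^k}{(k-1)!}=k\binom nk(1+o(1))$, I obtain
\[
  (k-1)\abs M\,\abs B=\left[(k-1)\alpha(1-\alpha)^{k-1}+o(1)\right]\binom nk,
\]
which is~(i).

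The only non-routine step is securing the packing $M$ of the claimed size; everything else is bookkeeping (the exact edge count, the upper bound on the clique count, and the asymptotic substitution). I therefore expect the cleanest exposition to cite Rödl's nibble directly rather than to reprove it, and the main care to lie in confirming that no unintended $(k+1)$-cliques arise, which is precisely what the packing condition rules out.
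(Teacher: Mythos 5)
Your proof is correct and is essentially the paper's own argument: the identical edge set $E(H)=M\cup\{X\cup\{b\}\,:\,X\in\Delta M,\ b\in B\}$ built from a partial Steiner system on $A$, the same counts $e(H)=\abs{M}+k\abs{M}\abs{B}$ and $c(H)\leq\abs{M}\abs{B}$, and the same asymptotic substitution (you additionally spell out well-coveredness and the clique classification, which the paper leaves implicit). The only difference is the packing you cite: the paper uses the elementary Graham--Sloane bound $\abs{S}\geq\frac{1}{\abs{A}}\binom{\abs{A}}{k}=\frac{1}{k}\binom{\abs{A}-1}{k-1}$ instead of R\"odl's nibble, and since both yield packings of the same leading-order size $(1+o(1))\frac{1}{k}\binom{\abs{A}}{k-1}$, either reference suffices.
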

\begin{proof}
    By \cite[Theorem 1]{Graham1980}, there exists $S\subseteq\binom{A}{k}$ with $\abs{X\cap Y}\leq k-2$ for all distinct $X,Y\in S$ and $\abs{S}\geq\frac{1}{\abs{A}}\binom{\abs{A}}{k}$. We define the edge set of $H$ by $E(H)=S\cup\{X\cup\{b\}\,:\,X\in\Delta S,\,b\in B\}$. Then $H$ is a $k$-graph satisfying (ii), and we can also verify (i):
    \begin{multline*}
    e(H)-c(H)=\abs{S}+k\abs{S}\abs{B}-\abs{S}\abs{B}=(k-1)\abs{S}\abs{B}+o(n^k)\\
    \geq\left[\frac{(k-1)\alpha}{1-\alpha}+o(1)\right]\binom{(1-\alpha)n}{k}=\left[(k-1)\alpha(1-\alpha)^{k-1}+o(1)\right]\binom{n}{k}.\qedhere
    \end{multline*}
\end{proof}
In contrast to the case $k=2$, for $k\geq 3$ the construction can still be improved. Note that so far the set $B$ is very thin: It contains at most 1 vertex from every edge. As a consequence, we can add another $k$-graph $H'$ on the vertex set $B$, and the graph $H''$ obtained by taking the union of the edge sets of $H$ and $H'$ satisfies $e(H'')-c(H'')=e(H)-c(H)+e(H')-c(H')$. A natural idea is to use the same construction for $H'$ as for $H$, and then this can be iterated.
\begin{example}\label{ex:second_step}
    For $n=30$ and $k=3$, we start with $A=\{1,2,\dots,19\}$ and $B=\{20,21,\dots,30\}$. Taking $M$ as a Steiner triple system on $A$, we obtain a 3-graph $H$ with $\frac13\binom{19}{2}+\binom{19}{2}\times 11=1938$ edges and $\frac13\binom{19}{2}\times 11=627$ complete $3$-graphs on 4 vertices. In the next stage, we split $B$ into $A_1=\{20,21,\dots,26\}$ and $B_1=\{27,28,29,30\}$, and as in Example~\ref{ex:first_step}, we obtain a 3-graph on vertex set $B$ with 91 edges and 28 complete 3-graphs on 4 vertices. Putting both parts together, we have $2029$ edges and 655 complete 3-graphs on 4 vertices. The corresponding dominating set has size  $2686\approx 0.66\binom{30}{3}$ (655 4-sets and $\binom{30}{3}-2029=2031$ 3-sets).
\end{example}
Now we want to make the above idea more precise. We apply the construction from \Cref{lem:base_constuction} recursively. Start with a partition $[n]=A_0\cup A_1\cup\dots\cup A_r$ such that
\begin{enumerate}[(i)]
    \item $\abs{A_0}=\left\lfloor(1-\alpha)n\right\rfloor$,
    \item $\abs{A_i}=\left\lfloor(1-\alpha)(n-\abs{A_0\cup\dots\cup A_{i-1}})\right\rfloor$ for $i=1,\dots,r$, and
    \item $\left\lfloor(1-\alpha)\abs{A_r}\right\rfloor<k\leq\left\lfloor(1-\alpha)(n-\abs{A_0\cup\dots\cup A_{r-2}})\right\rfloor$.
\end{enumerate}
For $i\in\{0,1,\dots,r-1\}$, let $H_i$ be the $k$-graph from \Cref{lem:base_constuction} on the vertex set $A_i\cup A_{i+1}\cup\dots\cup A_r$ with $A=A_i$ and $B=A_{i+1}\cup\dots\cup A_r$, and let $H$ be the $k$-graph on the vertex set $[n]$ with edge set $E(H)=E(H_0)\cup\dots\cup E(H_{r-1})$. In the following lemma we provide the asymptotics for $e(H)-c(H)$.
\begin{lemma}\label{lem:asymptotics}
  $\displaystyle e(H)-c(H)\geq\left[\frac{(k-1)\alpha(1-\alpha)^{k-1}}{1-\alpha^k}+o(1)\right]\binom{n}{k}$.  
\end{lemma}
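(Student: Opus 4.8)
The plan is to reduce $e(H)-c(H)$ to a sum of the per-level contributions $e(H_i)-c(H_i)$, each of which is controlled by \Cref{lem:base_constuction}, and then to evaluate the resulting geometric series. Throughout, write $n_i=n-\abs{A_0\cup\dots\cup A_{i-1}}$ for the size of the ground set $A_i\cup\dots\cup A_r$ of $H_i$, so that $n_0=n$ and, by the defining recursion for the $\abs{A_j}$, one has $n_i=\alpha^i n+O(1)$ over the relevant range of $i$. First I would establish that the edge sets $E(H_i)$ are pairwise disjoint. By property (ii) of \Cref{lem:base_constuction}, every edge of $H_i$ has at most one vertex in $A_{i+1}\cup\dots\cup A_r$, hence at least $k-1$ of its $k$ vertices lie in $A_i$. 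Since $k\geq 3$ gives $2(k-1)>k$ and the parts $A_i$ are pairwise disjoint, no $k$-set can have $k-1$ or more vertices in two distinct parts; thus each edge belongs to exactly one level and $e(H)=\sum_{i=0}^{r-1}e(H_i)$.

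The crux is to show that no $(k+1)$-clique of $H$ uses edges from more than one level, which will give $c(H)=\sum_{i=0}^{r-1}c(H_i)$ as well. Let $W$ be a $(k+1)$-clique. Every $k$-subset of $W$ is an edge and therefore lives in some $H_i$, forcing $\abs{W\cap A_i}\geq k-1$. If edges of $W$ came from two levels $i<j$, then $\abs{W\cap A_i}+\abs{W\cap A_j}\geq 2(k-1)$, which is incompatible with $\abs{W}=k+1$ as soon as $k\geq 4$. The only surviving case is $k=3$ with $W$ split as two vertices in $A_i$ and two in $A_j$; but then the $3$-subset of $W$ having two vertices in $A_j$ and one in $A_i$ cannot be an edge, since being an edge of $H_j$ would require its vertex outside $A_j$ to lie in $B_j=A_{j+1}\cup\dots\cup A_r$, whereas that vertex sits in $A_i$ with $i<j$. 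Hence $W$ is not a clique, and in all cases $k\geq 3$ every $(k+1)$-clique lies within a single level. The ``forward-only'' structure of the sets $B_i$ is exactly what rules out the cross-level cliques.

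Combining the two counts gives $e(H)-c(H)=\sum_{i=0}^{r-1}\bigl(e(H_i)-c(H_i)\bigr)$, and applying \Cref{lem:base_constuction}(i) on the $n_i$-vertex ground set of each $H_i$ yields
\[
  e(H_i)-c(H_i)\geq\left[(k-1)\alpha(1-\alpha)^{k-1}+o(1)\right]\binom{n_i}{k}.
\]
Since $n_i=\alpha^i n+O(1)$ we have $\binom{n_i}{k}=(1+o(1))\,\alpha^{ik}\binom{n}{k}$, and summing the bounds over the levels while using $\sum_{i\geq 0}\alpha^{ik}=\frac{1}{1-\alpha^k}$ produces precisely the constant $\frac{(k-1)\alpha(1-\alpha)^{k-1}}{1-\alpha^k}$.

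The main technical obstacle is that the number of levels $r$ grows with $n$, so the $o(1)$ error terms from \Cref{lem:base_constuction} cannot simply be added. I would handle this by fixing $\varepsilon>0$ and a threshold $L$ so that the error term is below $\varepsilon$ whenever $n_i\geq L$. The ``small'' levels with $n_i<L$ then contribute a total that is bounded in terms of $L$ alone, hence $o\!\left(\binom{n}{k}\right)$; moreover the exact identity $e(H_i)-c(H_i)=\abs{S}\bigl(1+(k-1)\abs{B}\bigr)\geq 0$ lets one simply discard them in a lower bound. Restricting the sum to the large levels, letting $n\to\infty$ so that the truncated geometric series approaches $\frac{1}{1-\alpha^k}$, and finally letting $\varepsilon\to 0$, yields the stated inequality.
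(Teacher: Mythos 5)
Your proposal is correct and follows essentially the same route as the paper: decompose $e(H)-c(H)$ into the per-level contributions $e(H_i)-c(H_i)$, apply \Cref{lem:base_constuction} to each level, sum the resulting geometric series, and discard the remaining levels using the nonnegativity of each term. The only differences are presentational: you truncate by a level-size threshold $L$ where the paper truncates at a fixed number $N$ of levels (both resolve the same error-accumulation issue), and you spell out the edge-disjointness and clique-confinement argument behind the additivity $e(H)-c(H)=\sum_i\bigl(e(H_i)-c(H_i)\bigr)$, which the paper only asserts in the discussion preceding the lemma via the observation that each $B$ meets every edge in at most one vertex.
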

\begin{proof}
    Fix $\varepsilon>0$ and then fix $\varepsilon_1>0$ satisfying
    \[\varepsilon_1\max\left\{\frac{(k-1)\alpha(1-\alpha)^{k-1}}{1-\alpha^k},\,\frac{1}{1-\alpha^k},\,\varepsilon_1\right\}<\frac{\varepsilon}{3}.\]
    Choose $N$ sufficiently large such that
    \[1+\alpha^k+\alpha^{2k}+\dots+\alpha^{NK}\geq\frac{1}{1-\alpha^k}-\varepsilon_1.\]
    By construction, $\abs{A_i\cup\dots\cup A_r}\geq\alpha^in$, and by \Cref{lem:base_constuction},
    \[e(H_i)-c(H_i)\geq\left((k-1)\alpha(1-\alpha)^{k-1}+o(1)\right)\binom{\alpha^in}{k}=\left((k-1)\alpha(1-\alpha)^{k-1}+o(1)\right)\alpha^{ik}\binom{n}{k}.\]
    Hence, if $n$ is sufficiently large, then for all $i\in\{0,1,\dots, N\}$,
    \[e(H_i)-c(H_i)\geq\left[(k-1)\alpha(1-\alpha)^{k-1}-\varepsilon_1\right]\alpha^{ik}\binom{n}{k}.\]
    Taking the sum over $i$, we obtain
    \begin{multline*}
      e(H)-c(H)\geq\sum_{i=0}^N\left(e(H_i)-c(H_i)\right)\geq\sum_{i=0}^N\left[(k-1)\alpha(1-\alpha)^{k-1}-\varepsilon_1\right]\alpha^{ik}\binom{n}{k}  \\
      \geq\left[(k-1)\alpha(1-\alpha)^{k-1}-\varepsilon_1\right]\left[\frac{1}{1-\alpha^k}-\varepsilon_1\right]\binom{n}{k} \geq\left[\frac{(k-1)\alpha(1-\alpha)^{k-1}}{1-\alpha^k}-\varepsilon\right]\binom{n}{k},
    \end{multline*}
    and this concludes the proof.
\end{proof}
Maximizing the right hand side in \Cref{lem:asymptotics} , we choose $\alpha$ to be the root of the polynomial $(k-1)x^k-kx+1$ in the interval $[0,1/2]$, and substituting $\frac{k\alpha-1}{k-1}$ for $\alpha^k$ yields
\[e(H)-c(H)=\left[\frac{(k-1)^2\alpha(1-\alpha)^{k-2}}{k}+o(1)\right]\binom{n}{k},\]
and this concludes the proof of \Cref{thm:independent_dominating_asymp}.

\section{Conclusion and open problems}\label{sec:open}
In this paper, we have investigated the independent domination number of $G_{k+1,k}$. For $k=2$, we proved the exact value and a complete characterization of the smallest independent dominating sets, and for $k\geq 3$ we improved the asymptotic upper bound from~\cite{Gerbner2012}. It would be nice to close the gap between the upper and lower bound, so we restate the following problem from~\cite{Gerbner2012} in our notation.
\begin{problem}
    Try to close the gap between the lower bound in~\eqref{eq:gerbner_bound} and the upper bound in Theorem \ref{thm:independent_dominating_asymp}. In particular, find a lower bound that does not depend on hypergraph Tur\'an densities.
\end{problem}
Another natural direction for further investigations is to consider $G_{l,k}$ where $l$ and $k$ are not consecutive. It is not hard to see that the lower bound in~\eqref{eq:gerbner_bound} actually applies to $\gamma(G_{k+1,k})$ instead of $i(G_{k+1,k})$ and generalizes as follows:
\[\gamma(G_{l,k})\geq\left(1-\frac{\binom{l}{k}-2}{\binom{l}{k}-1}t_{l,k}-o(1)\right)\binom{n}{k},\]
with
\[t_{l,k}=\lim_{n\to\infty}\frac{\operatorname{ex}\left(n,K_{l}^{(k)}\right)}{\binom{n}{k}}.\]
One problem with this bound is that the exact values of $t_{l,k}$ are not known for $k\geq 3$. So it makes sense to focus on the case $k=2$ with $t_{l,2}=\frac{l-2}{l-1}$. In this case it has been proved that the above lower bound for $\gamma(G_{l,2})$ is tight~\cite{Balogh2021}:
\[\gamma(G_{l,2})=\left(1-\frac{\binom{l}{2}-2}{\binom{l}{2}-1}\frac{l-2}{l-1}+o(1)\right)\binom{n}{2}=\left(\frac{l+3}{(l-1)(l+1)}+o(1)\right)\binom{n}{2}.\]
For $l=3$ the smallest dominating set can be taken to be independent, but for $l\geq 4$ it looks plausible that $i(G_{l,2})>\gamma(G_{l,2})$. 
For instance, for $n=9$ it can be checked that $i(G_{4,2})=17$, while $\gamma(G_{4,2})=15$. Corresponding constructions are illustrated in Figure~\ref{fig:4-2}.
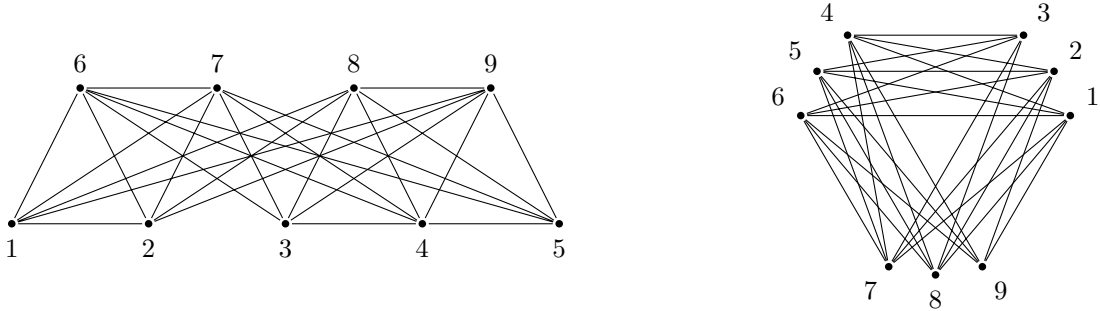
\begin{figure}[htb]
    \begin{minipage}{.49\textwidth}
    \centering
        \begin{tikzpicture}[scale=.9]
            \node[label={270:1},circle,fill=black,outer sep=1pt,inner sep=1pt] (v1) at (0,0) {};
            \node[label={270:2},circle,fill=black,outer sep=1pt,inner sep=1pt] (v2) at (2,0) {};
            \node[label={270:3},circle,fill=black,outer sep=1pt,inner sep=1pt] (v3) at (4,0) {};
            \node[label={270:4},circle,fill=black,outer sep=1pt,inner sep=1pt] (v4) at (6,0) {};
            \node[label={270:5},circle,fill=black,outer sep=1pt,inner sep=1pt] (v5) at (8,0) {};
            \node[label={90:6},circle,fill=black,outer sep=1pt,inner sep=1pt] (v6) at (1,2) {};
            \node[label={90:7},circle,fill=black,outer sep=1pt,inner sep=1pt] (v7) at (3,2) {};
            \node[label={90:8},circle,fill=black,outer sep=1pt,inner sep=1pt] (v8) at (5,2) {};
            \node[label={90:9},circle,fill=black,outer sep=1pt,inner sep=1pt] (v9) at (7,2) {};
            \draw (v1) -- (v2);
            \draw (v6) -- (v7);
            \draw (v8) -- (v9) -- (v5);
            \draw (v3) -- (v4) -- (v5);
            \draw (v1) -- (v6) -- (v2) -- (v7) -- (v3) -- (v6) -- (v4) -- (v7) -- (v1) -- (v8) -- (v2) -- (v9) -- (v1);
            \draw (v7) -- (v5) -- (v6);
            \draw (v5) -- (v8) -- (v3) -- (v9) -- (v4) -- (v8);            
        \end{tikzpicture}
    \end{minipage}\hfill
    \begin{minipage}{.49\textwidth}
    \centering
      \begin{tikzpicture}[scale=.9]
          \node[label={10:1},circle,fill=black,outer sep=1pt,inner sep=1pt] (v1) at (10:2) {};
          \node[label={30:2},circle,fill=black,outer sep=1pt,inner sep=1pt] (v2) at (30:2) {};
          \node[label={50:3},circle,fill=black,outer sep=1pt,inner sep=1pt] (v3) at (50:2) {};
          \node[label={130:4},circle,fill=black,outer sep=1pt,inner sep=1pt] (v4) at (130:2) {};
          \node[label={150:5},circle,fill=black,outer sep=1pt,inner sep=1pt] (v5) at (150:2) {};
          \node[label={170:6},circle,fill=black,outer sep=1pt,inner sep=1pt] (v6) at (170:2) {};
          \node[label={250:7},circle,fill=black,outer sep=1pt,inner sep=1pt] (v7) at (250:2) {};
          \node[label={270:8},circle,fill=black,outer sep=1pt,inner sep=1pt] (v8) at (270:2) {};
          \node[label={290:9},circle,fill=black,outer sep=1pt,inner sep=1pt] (v9) at (290:2) {};
          \draw (v1) -- (v4) -- (v7) -- (v1) -- (v5) -- (v7) -- (v2) -- (v4) -- (v8) -- (v1) -- (v6) -- (v7) -- (v3) -- (v4) -- (v9) -- (v1);
          \draw (v2) -- (v5) -- (v8) -- (v2) -- (v6) -- (v8) --  (v3) -- (v5) -- (v9) -- (v2);
          \draw (v3) -- (v6) -- (v9);
      \end{tikzpicture}  
    \end{minipage}
    \caption{Illustration for an optimal independent dominating set (left) and an optimal dominating set (right) in $G_{4,3}$ for $n=9$.}\label{fig:4-2}
\end{figure}
The independent dominating set is represented by the graph on the left: We take the vertex sets of the 6 copies of $K_4$ as 4-sets together with the 11 non-edges as 2-sets. For the general dominating set, we look at the complete tripartite graph $K_{3,3,3}$ shown on the right: The 4-sets $\{1,2,4,7\}$, $\{1,2,5,8\}$, $\{1,2,6,9\}$, $\{3,4,5,9\}$, $\{3,6,7,8\}$ and $\{4,5,7,8\}$ cover all the edges, and together with the 9 non-edges we obtain a dominating set of size 15. To verify optimality we solved corresponding binary linear programs using Gurobi \cite{gurobi}.
It would be interesting to establish an asymptotic difference between $i(G_{l,2})$ and $\gamma(G_{l,2})$ as follows.
\begin{problem}
Prove that for every $l\geq 4$ there exists $\varepsilon>0$ with 
\[i(G_{l,2})\geq\left(\frac{l+3}{(l-1)(l+1)}+\varepsilon-o(1)\right)\binom{n}{2}.\]
\end{problem}
For instance, for $l=4$, the best construction we are aware of is from~\cite{Kalinowski2013} and gives $i(G_{4,2})\leq\left(\frac58+o(1)\right)\binom{n}{2}$, while $\gamma(G_{4,2})=\left(\frac{7}{15}+o(1)\right)\binom{n}{2}$.
\printbibliography

@Article{Markstroem2009,
  author    = {Markstr\"om, Klas},
  journal   = {Discrete Mathematics},
  title     = {Extremal hypergraphs and bounds for the {T}ur\'an density of the 4-uniform $K_5$},
  year      = {2009},
  issn      = {0012-365X},
  number    = {16},
  pages     = {5231--5234},
  volume    = {309},
  doi       = {10.1016/j.disc.2009.03.035},
  publisher = {Elsevier BV},
}

@misc{gurobi,
  author = {{Gurobi Optimization, LLC}},
  title = {{Gurobi Optimizer Reference Manual}},
  year = 2023,
  url = "https://www.gurobi.com"
}

@Article{Gruettmueller2009,
  author    = {Gr\"uttm\"uller, Martin and Hartmann, Sven and Kalinowski, Thomas and Leck, Uwe and Roberts, Ian T.},
  title     = {Maximal flat antichains of minimum weight},
  journal   = {The Electronic Journal of Combinatorics},
  year      = {2009},
  volume    = {16},
  number    = {1},
  pages     = {R69},
  doi       = {10.37236/158},
  publisher = {The Electronic Journal of Combinatorics},
  url       = {http://www.combinatorics.org/ojs/index.php/eljc/article/view/v16i1r69},
}

@Article{Gerbner2012,
  author    = {D{\'{a}}niel Gerbner and Bal{\'{a}}zs Keszegh and Nathan Lemons and Cory Palmer and Dömötör P{\'{a}}lvölgyi and Bal{\'{a}}zs Patk{\'{o}}s},
  journal   = {Graphs and Combinatorics},
  title     = {Saturating {S}perner families},
  year      = {2012},
  number    = {5},
  pages     = {1355--1364},
  volume    = {29},
  doi       = {10.1007/s00373-012-1195-6},
  publisher = {Springer Science and Business Media {LLC}},
}

@Article{Kalinowski2013,
  author    = {Kalinowski, Thomas and Leck, Uwe and Roberts, Ian T.},
  title     = {Maximal antichains of minimum size},
  journal   = {The Electronic Journal of Combinatorics},
  year      = {2013},
  volume    = {20},
  number    = {2},
  pages     = {P3},
  doi       = {10.37236/2736},
  publisher = {The Electronic Journal of Combinatorics},
}

@Article{Graham1980,
  author    = {Ronald L. Graham and Neil J. A. Sloane},
  journal   = {{IEEE} Transactions on Information Theory},
  title     = {Lower bounds for constant weight codes},
  year      = {1980},
  number    = {1},
  pages     = {37--43},
  volume    = {26},
  doi       = {10.1109/tit.1980.1056141},
  publisher = {Institute of Electrical and Electronics Engineers ({IEEE})},
}

@Article{Badakhshian_2019,
  author    = {Leila Badakhshian and Gyula O. H. Katona and Zsolt Tuza},
  journal   = {Discrete Applied Mathematics},
  title     = {The domination number of the graph defined by two levels of the $n$-cube},
  year      = {2019},
  pages     = {30--37},
  volume    = {266},
  doi       = {10.1016/j.dam.2019.02.006},
  publisher = {Elsevier {BV}},
}

@Article{Balogh2021,
  author    = {J{\'{o}}zsef Balogh and Gyula O.H. Katona and William Linz and Zsolt Tuza},
  journal   = {European Journal of Combinatorics},
  title     = {The domination number of the graph defined by two levels of the $n$-cube, {II}},
  year      = {2021},
  pages     = {103201},
  volume    = {91},
  doi       = {10.1016/j.ejc.2020.103201},
  publisher = {Elsevier {BV}},
}

@Article{Goddard2013,
  author    = {Wayne Goddard and Michael A. Henning},
  journal   = {Discrete Mathematics},
  title     = {Independent domination in graphs: A survey and recent results},
  year      = {2013},
  number    = {7},
  pages     = {839--854},
  volume    = {313},
  doi       = {10.1016/j.disc.2012.11.031},
  publisher = {Elsevier {BV}},
}

@Book{Haynes2013,
  author    = {Teresa W. Haynes and Stephen Hedetniemi and Peter Slater},
  publisher = {{CRC} Press},
  title     = {Fundamentals of domination in graphs},
  year      = {2013},
  doi       = {10.1201/9781482246582},
}

@Book{Haynes2017,
  author    = {Teresa W. Haynes and Stephen Hedetniemi and Peter Slater},
  publisher = {Routledge},
  title     = {Domination in graphs: Advanced topics},
  year      = {2017},
  doi       = {10.1201/9781315141428},
}

@Article{Allan1978,
  author    = {Robert B. Allan and Renu Laskar},
  journal   = {Discrete Mathematics},
  title     = {On domination and independent domination numbers of a graph},
  year      = {1978},
  number    = {2},
  pages     = {73--76},
  volume    = {23},
  doi       = {10.1016/0012-365x(78)90105-x},
  publisher = {Elsevier {BV}},
}

@Article{Lu2009,
  author    = {Linyuan Lu and Yi Zhao},
  journal   = {{SIAM} Journal on Discrete Mathematics},
  title     = {An exact result for hypergraphs and upper bounds for the {T}ur{\'{a}}n density of $K^r_{r+1}$},
  year      = {2009},
  number    = {3},
  pages     = {1324--1334},
  volume    = {23},
  doi       = {10.1137/070710615},
  publisher = {Society for Industrial {\&} Applied Mathematics ({SIAM})},
}

@Article{Chung1999,
  author    = {Fan Chung and Linyuan Lu},
  journal   = {Journal of Combinatorial Theory, Series A},
  title     = {An upper bound for the {T}ur{\'{a}}n number $t_3(n,4)$},
  year      = {1999},
  number    = {2},
  pages     = {381--389},
  volume    = {87},
  doi       = {10.1006/jcta.1998.2961},
  publisher = {Elsevier {BV}},
}

\begin{appendix}
  \section{Proofs for the claims in the proof of Theorem~\ref{thm:extremal_constructions}}\label{app:claims}
  Recall that we assume $n\equiv 1\pmod 4$ and $n>9$, and that for $n\not\equiv 1\pmod 4$, we have
  already established that the graphs listed in \Cref{thm:extremal_constructions} are the only
  graphs with $\abs{E}-\abs{T}-\frac12\abs{E_0}=\left\lfloor\frac{(n-1)^2}{8}\right\rfloor$.
  \Mnonempty*
  \begin{proof} Assume $M=\emptyset$. From
    \[\beta\geq\frac12\sum_{xy\in E_1}\left(t(xy)-1\right)=\frac32\abs{T}-\frac12\abs{E_1},\]
    \[\gamma=\sum_{x\in V}\left(\frac{n+1}{2}-d(x)\right)^2\geq\sum_{x\in
        V}\left(\frac{n+1}{2}-d(x)\right)=\frac{n(n+1)}{2}-2\abs{E}\]
    and~(\ref{eq:n_1_base}), we obtain
    \[n\geq 2\abs{E_0}+2\beta+\gamma\geq \frac{n(n+1)}{2}+3\left(\abs{T}-\abs{E_1}\right).\]
    Using the optimality of $H$ and \Cref{lem:main_inequality},
    \[\frac{(n+3)(n-1)}{8}=\left\lfloor\frac{(n+1)^2}{8}\right\rfloor=\abs{E}-\abs{T}-\frac12\abs{E_0}=\abs{E_1}-\abs{T}+\frac12\abs{E_0}\\
        \geq\frac{n(n-1)}{6}+\frac12\abs{E_0},\]
        which simplifies to $0\geq n^2-10n+9+12\abs{E_0}=(n-1)(n-9)+12\abs{E_0}$. Together with $n\geq 9$, this implies $n=9$ and $E_0=\emptyset$, and then $H\cong H_9$ by~\cite{Gruettmueller2009}.
  \end{proof}
  From now on, we assume $M\neq\emptyset$. For $xy\in E$, set $V_1(xy)=N(x)\cap N(y)=\{z\,:\,xyz\in T\}$.
  \largedegreeweak*
  \begin{proof}
    Fix $xy\in M$ and set $H'=H-\{x,y\}$. In $H'$, let $E'$, $T'$ and $E_0'$ be the set of edges, the set of
    triangles, and the set of edges not in a triangle, respectively. Then
    \begin{align*}
      \abs{E'} &= \abs{E}-2d(x)+1,\\
      \abs{T'} &= \abs{T}-(d(x)-1)-2\abs{E(V_1(xy))},\\
      \abs{E'_0} &= \abs{E_0} +\abs{\{uv\in E(V_1(xy))\,:\,t(uv)=2\}}\leq\abs{E_0}+\abs{E(V_1(xy))}.
    \end{align*}
    By \Cref{thm:23} applied to $H'$,
    \[\frac{(n-1)^2}{8}\geq\abs{E'}-\abs{T'}-\frac12\abs{E'_0}\geq\abs{E}-\abs{T}-\frac12\abs{E_0}-\left(d(x)-\abs{E(V_1(xy))}\right),\]
    and with $\abs{E}-\abs{T}-\frac12\abs{E_0}=\frac{(n-1)(n+3)}{8}$, the claim follows.
  \end{proof}
  \largedegreestrong*
  \begin{proof}
    By \cite[Theorem 3]{Gruettmueller2009}, equality in \Cref{C:large_degree_weak} implies that $H'$ is isomorphic to
    $K^+_{(n-1)/2,(n-3)/2}$, and it follows that $V_1(xy)$ is the independent set of size $(n-3)/2$,
    hence the claim.
  \end{proof}
  From now on we assume $d(x)\geq\frac{n+1}{2}+\abs{E(V_1(xy))}$
  \vxyindep*
  \begin{proof}
    Suppose $uv\in E(V_1(xy))$. Then $t(xu)-1\geq 1$, and with $N(x)\cap N(u)\subseteq V_1(xy)\cup\{y\}$,
    \[t(xu)\leq\abs{E(V_1(xy))}+1\leq d(x)-\frac{n+1}{2}+1=d(x)-\frac{n-1}{2},\]
    and then
    \[\frac{d(x)+d(u)}{2}-t(xu)-1\geq\frac{d(x)+t(xu)+1}{2}-t(xu)-1=\frac{d(x)-t(xu)-1}{2}\geq\frac{n-3}{4}.\]  
    Similarly, for the edges $xv$, $yu$ and $yv$, and therefore $\beta\geq n-3>\frac{n}{2}$, which contradicts~(\ref{eq:n_1_base}).
  \end{proof}
  \inducedmatching*
  \begin{proof}
    If $xy,yz\in M$ then $wz\in E(V_1(xy))$ for every $w\in V_1(xy)\setminus\{z\}$, which contradicts \Cref{C:V1_xy_indep}. This shows that $M$ is a matching. Now suppose $xy,zw\in M$
    and $xz\in E$. By definition of $M$, $yz,xw,yw\in E$, and then $zw\in E(V_1(xy))$, again contradicting \Cref{C:V1_xy_indep}. Therefore, $M$ is an induced matching. In particular, for
    $xy\in M$, $V_1(xy)\subseteq V\setminus V(M)$, and with $\abs{V_1(xy)}=d(x)-1\geq\frac{n-1}{2}$, it follows that $\abs{M}\leq\frac{n-1}{4}$.
  \end{proof}
  \largeM*
  \begin{proof}
    Recall the notation
    \begin{align*}
      V_1 &= \bigcup_{xy\in M}V_1(xy) & V_1' &= \bigcap_{xy\in M} V_1(xy) & V_2 &= V\setminus(V_1\cup V(M)).
    \end{align*}
    For $xy\in M$ it follows from $d(x)\geq\frac{n+1}{2}$, $V_1(xy)\subseteq V\setminus V(M)$ and $\abs{V(M)}=\frac{n-1}{2}$ that $\abs{V_1(xy)}\in\{\frac{n-1}{2},\frac{n+1}{2}\}$.
    \begin{description}
    \item[Case 1] $V\setminus V(M)$ is an independent set. The maximality of
      $\abs{E}-\abs{T}-\frac12\abs{E_0}$ implies that $V'_1=V\setminus V(M)$, hence $H$ is isomorphic to $K^+_{(n-1)/2,(n+1)/2}$.
    \item[Case 2] $\abs{V_1}=\frac{n-1}{2}$. Then $V_1'=V_1$, $\abs{V_2}=1$ (say $V_2=\{y\}$), and the maximality of $\abs{E}-\abs{T}-\frac12\abs{E_0}$ implies that $xy\in E$ for all $x\in V'_1$. Hence $H\cong K^+_{(n+1)/2,(n-1)/2}$.
    \item[Case 3] $\abs{V_1}=\frac{n+1}{2}$ but $\abs{V_1(xy)}=\frac{n-1}{2}$ for all $xy\in M$. There must be an edge in $V\setminus V(M)$ (otherwise we are in Case 1), but there can't
      be more than one edge because the sets $V_1(xy)$ are independent sets. So let's say $E(V\setminus V(M))=\{uv\}$. For every $xy\in M$, $V_1(xy)=V_1\setminus\{u\}$ or
      $V_1(xy)=V_1\setminus\{v\}$, hence
      \[\abs{E}-\abs{T}-\frac12\abs{E_0}=\left(\left(\frac{n-1}{2}\right)^2+\frac{n-1}{4}+1\right)-\frac{(n-1)^2}{8}-\frac12=\frac{n^2+3}{8}<\frac{n^2+2n-3}{8}.\qedhere\]
    \end{description}

  \end{proof}
  From now on we assume $\abs{M}\leq\frac{n-5}{4}$.
  \largeintersection*
  \begin{proof}
    For all $z\in V_1\setminus V'_1$, say $z\in V_1(xy)\setminus V_1(x'y')$, $N(z)\subseteq V\setminus (V_1(xy)\cup\{x',y'\})$, hence $d(z)\leq\frac{n-3}{2}$. Then it follows
    from~(\ref{eq:n_1_base}) that
    \[n\geq\gamma\geq\sum_{z\in V_1\setminus V'_1}\left(\frac{n+1}{2}-d(z)\right)^2\geq 4\abs{V_1\setminus V'_1},\]
    hence $\abs{V_1\setminus V'_1}\leq\lfloor n/4\rfloor=\frac{n-1}{4}$.
  \end{proof}
  \voptions*
  \begin{proof}
    $\abs{V_1}\geq\frac{n-1}{2}$ follows from $d(x)\geq\frac{n+1}{2}$ for all $xy\in M$. Suppose $\abs{V_1}\geq\frac{n+3}{2}$. Then \Cref{C:large_intersection} implies
    $\abs{V'_1}\geq\frac{n+7}{4}$, and by \Cref{C:V1_xy_indep}, $d(z)\leq n-\abs{V_1}\leq\frac{n-3}{2}$ for every $z\in V'_1$. Hence $\gamma\geq 4\abs{V'_1}\geq n+7$, which contradicts~(\ref{eq:n_1_base}).
  \end{proof}
  \vindependent*
  \begin{proof}
    Suppose $uv\in E(V_1)$. Then $\abs{V_1}=\frac{n+1}{2}$ and for every $xy\in M$, $V_1(xy)=V_1\setminus\{u\}$ or $V_1(xy)=V_1\setminus\{v\}$, and by \Cref{C:V1_xy_indep},
    $E(V_1)=\{uv\}$. For every $xy\in M$, $z\in V'_1= V_1\setminus\{u,v\}$, either $E\cap\{xv,yv,zv\}=\emptyset$ or $E\cap\{xu,yu,zu\}=\emptyset$, hence
    \[\alpha\geq\abs{M}\abs{V'_1}=\abs{M}\frac{n-3}{2}\geq\frac{n-3}{2}.\]
    Moreover, for every $z\in V'_1$, $d(z)\leq\frac{n-1}{2}$ and also $\min\{d(u),d(v)\}\leq\frac{n-3}{2}$, hence
    $\gamma\geq\abs{V'_1}+4=\frac{n+5}{2}$. With~(\ref{eq:n_1_base}) this implies
    $\frac{n}{2}\geq\frac{n-3}{2}+\frac{n+5}{4}$, hence $n\leq 1$. Finally, $V_1(xy)=V_1$ for all $xy\in M$ follows from the maximality of $\abs{E}-\abs{T}-\frac12\abs{E_0}$.
  \end{proof}
  Let $\delta=\abs{V_1}\abs{V_2}-\abs{E(V_1,V_2)}$ be the number of non-edges between $V_1$ and $V_2$.
  \vsmall*
  \begin{proof}
      Suppose $\abs{V_1}=\frac{n+1}{2}$. From $d(x)=\frac{n+3}{2}$ for all $x\in V(M)$, it follows that
    \begin{equation}\label{eq:gamma_bound_1}
      \sum_{x\in V(M)}\left(\frac{n+1}{2}-d(x)\right)^2=2\abs{M}.
    \end{equation}
    By~\Cref{C:V1_independent}, $d(x)\leq\frac{n-1}{2}$ for every $x\in V_1$. Removing an edge $xy$ with $x\in V_1$ and $y\in V_2$ increases $(\frac{n+1}{2}-d(x))^2$ by at least $3$, hence
    \begin{equation}\label{eq:gamma_bound_2}
      \sum_{x\in V_1}\left(\frac{n+1}{2}-d(x)\right)^2\geq\frac{n+1}{2}+3\delta.
    \end{equation}
    Recall that for $i\in\{1,2\}$, $d_i(x)$ is the number of neighbors of $x$ in $V_i$.
    If $\delta=0$ then (using that $d_2(x)\geq 2$ for at least one $x\in V_2$ by \Cref{C:V2})
    \[\sum_{x\in V_2}\left(\frac{n+1}{2}-d(x)\right)^2=\sum_{x\in V_2}d_2(x)^2\geq 2\abs{E(V_2)}+3\]
    and together with~(\ref{eq:gamma_bound_1}) and (\ref{eq:gamma_bound_2}),
    \[\gamma\geq\frac{n+1}{2}+2\abs{M}+2\abs{E(V_2)}+3\geq n+3.\]
    This contradicts~(\ref{eq:n_1_base}), and we can assume $\delta\geq 1$. For $x\in V_2$, let $\delta_x=\frac{n+1}{2}-d_1(x)$ be the number of $y\in V_1$ with $xy\not\in E$. Then $\left(\frac{n+1}{2}-d(x)\right)^2\geq d_2(x)-\delta_x$, hence
    \[\sum_{x\in V_2}\left(\frac{n+1}{2}-d(x)\right)^2\geq 2\abs{E(V_2)}-\delta,\]
    and together with~(\ref{eq:gamma_bound_1}) and (\ref{eq:gamma_bound_2}),
    \[\gamma\geq\frac{n+1}{2}+3\delta+2\abs{M}+2\abs{E(V_2)}-\delta\geq n+2\delta>n,\]
    which is the required contradiction.
  \end{proof}
  \lastclaim*
  \begin{proof}
    Suppose $x$ and $y$ are isolated vertices in $H[V_2]$. Then $\abs{E}-\abs{T}-\frac12\abs{E_0}$ can be increased by adding the edge $xy$ (and all the edges between $\{x,y\}$ and $V_1$ that
    might be missing in $H$). If $xy$ is an isolated edge in $H[V_2]$ then $V_1(xy)=V_1$ (by optimality of $H$), hence $xy\in M$.
  \end{proof}  
  \vfewedges*
  \begin{proof}
    For $xy\in M$ and $z\in V_1$, $\alpha_{xyz}\geq\abs{\{w\in V_2\,:\,zw\not\in E\}}$, hence $\alpha\geq\abs{M}\delta\geq\delta$. Moreover,
    \[\gamma\geq\sum_{x\in V_1}\left(\frac{n+1}{2}-d(x)\right)^2\geq\sum_{x\in V_1}\abs{\{y\in V_2\,:\,xy\not\in E\}}^2\geq\delta.\] 
    Now~(\ref{eq:n_1_base}) implies $\delta+\frac12\delta\leq\frac{n}{2}$. For $n\geq 13$, this already implies the claim: $\delta\leq\lfloor\frac{n}{3}\rfloor\leq\frac{n-5}{2}$. For $n=9$, we still have to rule out that $\delta=3$. It follows from Claims~\ref{C:V1small} and~\ref{C:V2}, together with $1\leq\abs{M}\leq\frac{n-5}{4}$, that $\abs{M}=1$, $\abs{V_2}=3$, and $V_2$ induces either a triangle or a path with two edges. In the first case ($V_2$ induces a triangle), there is no edge between $V(M)$ and this triangle, hence $\alpha\geq\delta+2$, and consequently $\delta\leq\lfloor\frac{9}{2}-2\rfloor=2$. In the second case ($V_2$ induces a path with two edges), we have $\sum_{x\in V_2}\left(\frac{n+1}{2}-d(x)\right)^2\geq \delta-1$, hence $\gamma\geq 2\delta-1$, and $\alpha+\frac12\gamma\geq 2\delta-\frac12$, which, together with~(\ref{eq:n_1_base}), implies $\delta\leq 2$.  
  \end{proof}
\end{appendix}

\end{document}